\setlist[enumerate, 1]{1\textsuperscript{o}}
\newtheorem{theorem}{Theorem}[section]
\newtheorem{lemma}[theorem]{Lemma}
\newtheorem{proposition}[theorem]{Proposition}
\theoremstyle{corollary}
\theoremstyle{definition} 
\newtheorem{definition}[theorem]{Definition}
\newtheorem{definition-lemma}[theorem]{Definition-Lemma}
\newtheorem{example}[theorem]{Example}
\newtheorem{remark}[theorem]{Remark}
\numberwithin{equation}{section}
\newcommand{\R}{\mathbb{R}}
\newcommand{\Q}{\mathbb{Q}}
\DeclareMathOperator{\ord}{ord}
\DeclareMathOperator{\Mob}{Mob}
\DeclareMathOperator{\eff}{eff}
\DeclareMathOperator{\Div}{Div}
\DeclareMathOperator{\FT}{FT}
\DeclareMathOperator{\pFT}{pFT}
\def\mult{\operatorname{mult}}
\def\Supp{\operatorname{Supp}}
\def\Exc{\operatorname{Exc}}
\def\lct{\operatorname{lct}}
\def\plct{\operatorname{plct}}
\DeclarePairedDelimiterX{\inp}[2]{\langle}{\rangle}{#1, #2}
\DeclarePairedDelimiterX{\norm}[1]{\lVert}{\rVert}{#1}
\title[plc thereshold]
{ACC of plc threshold}
\begin{document}

\author{Sung Rak Choi}
\author{Sungwook Jang}
\address{Department of Mathematics, Yonsei University, 50 Yonsei-ro, Seodaemun-gu, Seoul 03722, Republic of Korea}
\email{sungrakc@yonsei.ac.kr}
\email{swjang@yonsei.ac.kr}


\date{\today}
\keywords{}

\begin{abstract}
In this paper, we define the potential log canonical threshold and prove the ascending chain condition (ACC) of the set of these thresholds satisfies. We also consider collections of Fano type varieties and study their basic properties including boundedness.
\end{abstract}

\maketitle

\section{Introduction}

Recently, various thresholds in birational geometry have attracted  much attention as their interesting behaviours (e.g., ascending chain condition, ACC) turned out to have important implications especially in the minimal model program. In this paper, we are interested in the so called \textit{potential thresholds} for the pair $(X,\Delta)$ such that $-(K_X+\Delta)$ is pseudoeffective. It is a generalization of the well-known log canonical thresholds.
The notion of potential pairs was first introduced and studied in \cite{CP16}. It was defined as a means to bound the singularities of the outcome of the $-(K_X+\Delta)$-minimal model program (MMP).
We believe that the deeper understanding of this notion will be useful in establishing the  $-(K_X+\Delta)$-MMP.

For a pair $(X,\Delta)$ with an effective divisor $\Delta$ such that $-(K_X+\Delta)$ is pseudoeffective, consider a birational morphism $f:Y\to X$ and the divisorial Zariski decomposition $-f^*(K_X+\Delta)=P+N$. If $F$ is a prime divisor on $Y$, $a(F;X,\Delta)$ denotes the log discrepancy of $(X,\Delta)$ at $F$. We  define the \emph{potential log discrepancy} of $(X,\Delta)$ at $F$ as $\bar{a}(F;X,\Delta):=a(F;X,\Delta)-\mult_FN$.
The given pair $(X,\Delta)$ is said to be potentially klt (resp. $\epsilon$-potentially lc for $\epsilon\geq 0$) if $\inf_{f,F}\bar{a}(F;X,\Delta)>0$ (resp. $\geq\epsilon$) holds. We write pklt for potentially klt and $\epsilon$-plc for $\epsilon$-potentially lc. See Definition \ref{def:plc} and \cite{CP16} for more details.



By construction, the notions of pklt, $\epsilon$-plc capture the singularities of the pair $(X,\Delta)$ as well as the positivity of $-(K_X+\Delta)$. However, if $-(K_X+\Delta)$ is nef, then the notions of pklt, $\epsilon$-plc coincide with the usual klt, $\epsilon$-lc. One of the most important properties of these notions is their invariant property under $-(K_X+\Delta)$-MMP (Proposition \ref{prop:MMP invariant}). 

Recall that the log canonical threshold $\lct(X,\Delta;D)$ of a klt pair $(X,\Delta)$ with respect to a given divisor $D$ is defined as the supremum of the real numbers $t$ for which $(X,\Delta+tD)$ is lc. The \emph{potential lct} $\plct(X,\Delta;D)$ of a plc pair $(X,\Delta)$ with respect to $D$ is defined similarly and by definition it is at most the effective threshold $t_{\eff}(X,\Delta;D):=\sup\{t\geq 0| -(K_X+\Delta+tD) \text{ is pseudoeffective}\}$. See Definition \ref{def:plc} for the precise definition.


The following are the main results of this paper.
We first prove the ACC of effective thresholds $t_{\eff}(X,\Delta;D)$ for $\epsilon$-plc pairs $(X,\Delta)$ with $\epsilon>0$ in a fixed dimension.

\begin{theorem}\label{thrm:main t_eff}
Let $I$ be a DCC set of nonnegative real numbers, $n$ a natural number and $\epsilon$ a positive real number. Then there exists an ACC set $I'$ such that the pseudoeffective threshold $t_{\eff}=t_{\eff}(X,\Delta;D)$ belongs to $I'$, whenever the pair $(X,\Delta)$ and the divisor $D$ satisfy the following conditions:
\begin{enumerate}[$(1)$]
\item $(X,\Delta)$ is an $\epsilon$-plc pair with $X$ of dimension $n$,
\item $-(K_{X}+\Delta)$ is big,
\item $D$ is a nef divisor, and
\item the coefficients of $\Delta$ and $D$ belong to $I$.
\end{enumerate}
\end{theorem}

By Example \ref{eg:e=0}, we cannot let $\epsilon=0$ in Theorem \ref{thrm:main t_eff}.



\begin{theorem}\label{thrm:main}
Suppose that $I$ is a DCC set of nonnegative real numbers and $n$ is a natural number. Then there is an ACC set $\Sigma$ of real numbers such that the plc threshold $\plct(X,\Delta;D)$ of $(X,\Delta)$ with respect to $D$ belongs to $\Sigma$ whenever $(X,\Delta)$, $D$ satisfy the following conditions
\begin{enumerate}[$(1)$]
\item $(X,\Delta)$ is a pklt pair of dimension $n$,
\item $-(K_{X}+\Delta)$ is big, and
\item the coefficients of $\Delta$ and $D$ belong to $I$.
\end{enumerate}
\end{theorem}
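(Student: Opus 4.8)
The plan is to argue by contradiction and reduce the statement to the ACC for ordinary log canonical thresholds of Hacon--McKernan--Xu. Suppose no such $\Sigma$ exists; then there are data $(X_i,\Delta_i)$, $D_i$ satisfying (i)--(iv) for which $t_i:=\plct(X_i,\Delta_i,D_i)$ is strictly increasing. A potentially log canonical pair has all boundary coefficients at most $1$, and since $I$ is DCC the set $I\setminus\{0\}$ has a least element $\delta>0$ (with $D_i\neq 0$ because $D_i$ is big), so each component of $t_iD_i$ has coefficient $\le 1$ and hence $t_i\le 1/\delta$; in particular $t_\infty:=\lim_i t_i$ is finite. Also, since $\epsilon>0$ and $\bar a\le a$ always, every $\epsilon$-plc pair is $\epsilon$-lc, so each $(X_i,\Delta_i)$ is klt. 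Before the main reduction I would record that the plc threshold is attained and computed by a divisor: there are a birational model $g_i\colon Z_i\to X_i$ and a prime divisor $F_i$ on $Z_i$ with $\bar a(X_i,\Delta_i+t_iD_i,F_i)=0$. This rests on the fact that for a pseudoeffective --- a fortiori, big --- class the negative part of the divisorial Zariski decomposition has finitely many components and stabilizes on a high model, so that $t\mapsto\inf_F\bar a(X_i,\Delta_i+tD_i,F)$ is a minimum of finitely many continuous functions, hence upper semicontinuous, and $\{t:(X_i,\Delta_i+tD_i)\text{ is plc}\}$ is closed.

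The core of the proof is an MMP reduction: for each $i$ I would run the $-(K_{X_i}+\Delta_i+t_iD_i)$-MMP (with scaling of an ample divisor) starting from the plc pair $(X_i,\Delta_i+t_iD_i)$, which by the bigness of $-(K_{X_i}+\Delta_i+t_iD_i)$ --- in the remaining case $t_i=t_{\eff,i}$ it is only pseudoeffective and must be handled separately, e.g.\ after passing to a model on which it is movable --- terminates with a pair $(X_i',\Delta_i')$ and a divisor $D_i'$ on which $-(K_{X_i'}+\Delta_i'+t_iD_i')$ is nef. By Proposition~\ref{prop:MMP invariant}, applied with the boundary $\Delta_i+t_iD_i$, the pair $(X_i',\Delta_i'+t_iD_i')$ is plc and the potential log discrepancies are preserved, so the computing divisor survives: $\bar a(X_i',\Delta_i'+t_iD_i',F_i)=0$. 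Moreover, each step of this program contracts or flips an extremal ray that is either $(K+\Delta)$-nonpositive --- hence a step of an ordinary $K+\Delta$-MMP or a crepant operation, which does not decrease the log discrepancies of $(X,\Delta)$ --- or $(K+\Delta)$-positive, i.e.\ a step of a $-(K+\Delta)$-MMP, to which Proposition~\ref{prop:MMP invariant} again applies; so the $\epsilon$-lc (indeed $\epsilon$-plc) property of $(X_i,\Delta_i)$ is preserved, and $(X_i',\Delta_i')$ is an $\epsilon$-lc pair of dimension $n$ with the coefficients of $\Delta_i'$ and $D_i'$ in $I$ (pushing forward can only drop components). This reduction --- arranging that a single run of the program makes the twisted anticanonical class nef while keeping $(X,\Delta)$ $\epsilon$-lc, keeping the coefficients in $I$, and carrying the computing divisor, together with the pseudoeffective-boundary case $t_i=t_{\eff,i}$ --- is the step I expect to be the main obstacle.

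Granting this, the rest is formal. On $X_i'$ the nef class $-(K_{X_i'}+\Delta_i'+t_iD_i')$ pulls back to a nef, hence movable, class on any model, so the negative part of its divisorial Zariski decomposition vanishes; therefore $\bar a(X_i',\Delta_i'+t_iD_i',F)=a(X_i',\Delta_i'+t_iD_i',F)$ for every prime divisor $F$, and in particular $a(X_i',\Delta_i'+t_iD_i',F_i)=0$. Writing $c_i:=\mult_{F_i}(\text{pullback of }D_i')\ge 0$, this gives $a(X_i',\Delta_i',F_i)=t_ic_i$; since $(X_i',\Delta_i')$ is $\epsilon$-lc and $t_i\le 1/\delta$, we get $c_i\ge\epsilon\delta>0$, whence $a(X_i',\Delta_i'+tD_i',F_i)=(t_i-t)c_i<0$ for all $t>t_i$. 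Thus $(X_i',\Delta_i'+tD_i')$ is log canonical for $t=t_i$ (it is even plc) but not for $t>t_i$, i.e.\ $t_i=\lct(X_i',\Delta_i',D_i')$. As the $(X_i',\Delta_i')$ are log canonical pairs of dimension $n$ with the coefficients of $\Delta_i'$ and $D_i'$ in the DCC set $I$, the ACC for log canonical thresholds of Hacon--McKernan--Xu implies that $\{t_i\}$ satisfies the ACC, contradicting that it was chosen strictly increasing. This proves the theorem modulo the MMP reduction above.
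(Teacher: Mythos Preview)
Your overall strategy---pass to a birational model on which $-(K+\Delta+tD)$ is nef and then identify the plc threshold with an ordinary log canonical threshold to which Hacon--McKernan--Xu applies---is exactly the paper's. The genuine gap is the case $t_i=t_{\eff}(X_i,\Delta_i,D_i)$, which you set aside with ``must be handled separately, e.g.\ after passing to a model on which it is movable.'' The paper treats this case by a completely separate argument (Theorem~\ref{thrm-t_eff}): it shows that the pseudoeffective thresholds themselves form an ACC set, by passing to the semiample model, restricting to a general fibre $F'$ of the Iitaka fibration of $-(K_{X'}+\Delta'+t_{\eff}D')$, and invoking BAB boundedness of $F'$ (which is $\epsilon$-lc Fano) to express $t_{\eff}$ as a quotient of intersection numbers lying in controlled sets. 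This is where the hypothesis $\epsilon>0$ is actually used (cf.\ Example~\ref{eg:e=0}), and nothing in your MMP sketch substitutes for it.

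For the remaining case $\plct<t_{\eff}$ your approach is morally the same as the paper's, but the paper sidesteps the obstacle you flagged. Since $X$ is of Fano type the section ring $R(X;-(K_X+\Delta),-(K_X+\Delta+\lambda_1 D))$ is finitely generated; Theorem~\ref{thrm:geography} then furnishes in one stroke a resolution $f\colon Y\to X$ on which each $-f^*(K_X+\Delta+\lambda D)$ has a good Zariski decomposition $P(\lambda)+N(\lambda)$, together with the semiample model $g\colon Y\to X'$ valid for all $\lambda\in[\lambda_1-\delta,\lambda_1]$. Lemma~\ref{lemma1} says $(X,\Delta+\lambda D)$ is pklt iff the coefficients of $\Delta_Y+\lambda f^*D+N(\lambda)$ are below $1$, and since $K_Y+\Delta_Y+\lambda f^*D+N(\lambda)=g^*(K_{X'}+\Delta'+\lambda D')$ this is precisely the klt condition for $(X',\Delta'+\lambda D')$. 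Hence $\plct(X,\Delta,D)=\lct(X',\Delta',D')$ on the nose, with no step-by-step MMP, no tracking of a computing divisor, and no need to argue that $\epsilon$-lc of $(X,\Delta)$ survives a mixture of $(K+\Delta)$-positive and $(K+\Delta)$-nonpositive steps. Your route can be pushed through here with more care, but the finite-generation packaging is exactly what dissolves the obstacle you identified.
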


If $-(K_X+\Delta)$ is nef, then the notion of plc coincides with the usual lc for any $\epsilon\geq 0$. Thus in this case, Theorem \ref{thrm:main} is implied by Theorem 1.1 of \cite{HMX14}. Note that by Theorem \ref{thrm:FT criterion}, the conditions (1),(2) imply that $X$ is of Fano type. In Theorem \ref{thrm:main}, if we assume that $X$ is of Fano type, then we can relax the conditions (1),(2) in Theorem \ref{thrm:main} as follows
\begin{enumerate}[$(1)'$]
\item $(X,\Delta)$ is a plc pair of dimension $n$,
\item $-(K_{X}+\Delta)$ is pseudoeffective.
\end{enumerate}
Then the result is also implied by \cite[Theorem 21]{sho} and \cite[Theorem 8.20]{HLS} due to Proposition \ref{prop:plct=rclct}. However, our proof for Theorem \ref{thrm:main} is independent and relies only on some results of pklt, not of $\R$-complements.


We also prove that for a fixed positive integer $n$ and a positive number $\epsilon$,  the collection of $\epsilon$-plc pairs $(X,\Delta)$ such that $-(K_X+\Delta)$ is big is bounded. See Section \ref{sec4}.

\begin{theorem}
Let $d$ be a positive integer and $\epsilon$ a positive real number. Then the set of projective varieties $X$ satisfying the following conditions:
\begin{enumerate}[$(1)$]
\item $X$ is a projective variety of dimension $d$,
\item there is a boundary divisor $\Delta$ such that $(X,\Delta)$ is a $\epsilon$-plc and $-(K_{X}+\Delta)$ is big
\end{enumerate}
is a bounded family.
\end{theorem}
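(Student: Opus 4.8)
The plan is to reduce to Birkar's boundedness theorem for $\epsilon$-lc Fano type varieties (the Borisov--Alexeev--Borisov conjecture) by means of the $-(K_X+\Delta)$-MMP and the invariance of potential singularities under it. First observe that if $(X,\Delta)$ is $\epsilon$-plc then for every prime divisor $F$ over $X$ one has $a(X,\Delta,F)\ge\bar a(X,\Delta,F)\ge\epsilon$, so $(X,\Delta)$ is $\epsilon$-lc; in particular it is klt and the coefficients of $\Delta$ are at most $1-\epsilon$. Since $-(K_X+\Delta)$ is big, we may run the $-(K_X+\Delta)$-MMP by the standard results of the minimal model program, and, $-(K_X+\Delta)$ being big, it terminates with a birational contraction $\phi\colon X\dashrightarrow X'$, $\Delta':=\phi_*\Delta$, for which $-(K_{X'}+\Delta')$ is nef; as it is also big, it is semiample.

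By Proposition \ref{prop:MMP invariant}, $(X',\Delta')$ is again $\epsilon$-plc, and since $-(K_{X'}+\Delta')$ is nef, its potential log discrepancies coincide with the usual ones; hence $(X',\Delta')$ is an $\epsilon$-lc pair of dimension $d$ with $-(K_{X'}+\Delta')$ nef and big. By Birkar's theorem the varieties $X'$ arising this way lie in a bounded family $\mathcal{P}'$.

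The remaining and, I expect, hardest step is to descend from $X'$ back to $X$. The prime divisors of $X$ contracted by $\phi$ lie in the support of the negative part $N$ of $-(K_X+\Delta)$; for such a divisor $E$, the invariance of the potential log discrepancy together with the vanishing of the negative part of the nef class $-(K_{X'}+\Delta')$ gives
\[
a(X',\Delta',E)=\bar a(X',\Delta',E)=\bar a(X,\Delta,E)=a(X,\Delta,E)-\mult_E N\in[\epsilon,1),
\]
where the lower bound is the $\epsilon$-plc hypothesis and the strict upper bound follows from $a(X,\Delta,E)\le 1$ and $\mult_E N>0$. Thus each $X$ is obtained from the bounded variety $X'$ by extracting valuations of log discrepancy in $[\epsilon,1)$ with respect to $(X',\Delta')$, and one must deduce boundedness of the $X$ from boundedness of $X'$ together with a boundedness statement for such birational modifications. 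Here lies the main difficulty: the upper bound $<1$ is not uniform away from $1$, and $(X',\Delta')$ need not be bounded as a pair since the coefficients of $\Delta$ are unconstrained, so one has to bound the number of extracted divisors and the attached discrepancy data uniformly over $\mathcal{P}'$. An alternative route, which may sidestep the downward MMP, is to work instead on a birational model $g\colon Y\to X$ realizing the divisorial Zariski decomposition $-g^*(K_X+\Delta)=P+N$ with $P$ nef and big: then, writing $K_Y+\Delta_Y=g^*(K_X+\Delta)$, the $\epsilon$-plc hypothesis forces the log discrepancies of $(Y,\Delta_Y+N)$ to be bounded below by $\epsilon$ while $-(K_Y+(\Delta_Y+N))=P$ is nef and big, so after a routine adjustment making the boundary effective, Birkar bounds $Y$, and $X$ is recovered as the image of a bounded family under birational contractions.
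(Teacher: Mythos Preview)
Your reduction to a bounded $X'$ via the MMP and Birkar's BAB theorem is correct, and you are right that the descent from $X'$ back to $X$ is the crux. However, you leave this step unfinished: neither of your two sketches actually closes the argument. In the first, as you yourself note, the extracted divisors have log discrepancy in $[\epsilon,1)$ with no uniform gap from $1$, and $(X',\Delta')$ is not bounded as a pair, so there is no available boundedness-of-extractions statement to invoke. In the second, $\Delta_Y$ is not a boundary (it has negative coefficients along exceptional divisors of positive discrepancy), so $(Y,\Delta_Y+N)$ is not a pair to which BAB applies; the ``routine adjustment'' you allude to is exactly the nontrivial content, and even once $Y$ is bounded you still have to bound $X$ as an image, which is the same descent problem again.

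The paper resolves the descent differently, via complements. It runs the $-K_X$-MMP (not the $-(K_X+\Delta)$-MMP): since $(X,\Delta)$ is $\epsilon$-plc it is $\epsilon$-lc, hence $X$ itself is $\epsilon$-lc, and $-K_X$-negative steps are $K_X$-nonnegative so they preserve $\epsilon$-lc; the output $X'$ is then $\epsilon$-lc with $-K_{X'}$ nef and big, so BAB bounds $X'$. Boundedness of $X'$ gives a uniform $n$ with $|-nK_{X'}|$ Cartier and free; a general member yields a klt $n$-complement $K_{X'}+B'\sim_{\mathbb Q}0$, which lifts along the MMP to a klt $n$-complement $K_X+B\sim_{\mathbb Q}0$ on $X$. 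Now \cite{HX15} (boundedness of log Calabi--Yau pairs of Fano type with bounded index) bounds $X$. Running the $-K_X$-MMP rather than the $-(K_X+\Delta)$-MMP is what makes the complement have bounded index, since $\Delta$ has unconstrained coefficients. The paper also records an alternative (Proposition~\ref{prop:pFT leq FT}) closer in spirit to your second sketch: one shows directly that $X\in\FT(d,\epsilon')$ for any $\epsilon'<\epsilon$ by perturbing $\Delta+D$ (with $D\sim_{\mathbb Q}-(K_X+\Delta)$ built from the good Zariski decomposition) by a small multiple of an ample-plus-effective decomposition of $-(K_X+\Delta)$, and then BAB applies to $X$ itself without any descent.
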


\bigskip

This paper is organized as follows. In Section 2, we give the definition of potential lct and gather various materials for the proof of our main result. In Section 3, we prove  Theorem \ref{thrm:main}. In Section 4, we study exhausting sequences for the set of varieties of Fano type.

\section*{Acknowledgement}
The authors thank professors Jingjun Han, Jihao Liu and V.V. Shokurov for the comments and remarks and also for informing us of the results \cite[Theorem 21]{sho} and \cite[Theorem 8.20]{HLS}.

\section{Preliminaries}

In this section, we collect basic definitions and results that are necessary in the proof of the main results. Throughout the paper, we consider $\mathbb Q$-factorial normal projective varieties defined over some algebraically closed field of characteristic $0$. Any divisor will be an $\R$-divisor unless otherwise stated. Here, $\Q$-factoriality is assumed for convenience in order to consider freely the pull-backs of various divisors. However, this assumption may be avoided by taking $\Q$-factorializations.

\subsection{Positivity}\hfill

Let $\Gamma$ be a prime divisor on a variety $X$. If $D$ is big, then we define the asymptotic divisorial valuation $\sigma_{\Gamma}(D)$ of $\Gamma$ along $D$ as
\[\sigma_{\Gamma}(D):=\inf\{\mult_{\Gamma}\Delta ~|~ \Delta\in |D|_{\mathbb{R}}\},\]
where $|D|_{\mathbb{R}}=\{D'\ge 0 ~|~ D'\sim_{\mathbb{R}} D\}$. If $D$ is only pseudoeffective, then we define $\sigma_{\Gamma}(D):=\lim_{\varepsilon\to 0}\sigma_{\Gamma}(D+\varepsilon A)$ where $A$ is an ample divisor. Here, the asymptotic divisorial valuation $\sigma_{\Gamma}(D)$ is independent of the choice of $A$.
By \cite[Proposition III.1.10]{Nak04}, it is known that there are only finitely many prime divisors $\Gamma$ such that $\sigma_{\Gamma}(D)>0$.

\begin{definition}[divisorial Zariski decomposition] \label{definition_ZD}
For a pseudoeffective divisor $D$ on $X$, we define the \emph{negative part} $N_\sigma(D)$ of $D$ as
\[N_{\sigma}(D):=\sum_{\Gamma}\sigma_{\Gamma}(D)\Gamma\]
where the sum is taken over all prime divisors on $X$, and  the \emph{positive part} of $D$ as $P_{\sigma}(D):=D-N_{\sigma}(D)$. We call the expression $D=P_{\sigma}(D)+N_{\sigma}(D)$ the \emph{divisorial Zariski decomposition} of $D$.
If $P_{\sigma}(D)$ is nef (resp. semiample), then we call $D=P_{\sigma}(D)+N_{\sigma}(D)$ the Zariski decomposition (resp. good Zariski decomposition) of $D$.
\end{definition}

It is known that the positive part $P$ of $D$ carries much of the positivity of $D$.

\subsection{Singularity}\hfill

By definition, a pair $(X,\Delta)$ consists of a $\Q$-factorial normal projective variety $X$ and a boundary $\mathbb{Q}$-divisor $\Delta$ on $X$, i.e., the coefficients of $\Delta$ are real numbers in $[0,1]$.

\begin{definition}
Let $(X,\Delta)$ be a pair and $f:Y\to X$ be a proper birational morphism with $Y$ a normal projective variety. A prime divisor $F$ on such $Y$ is called a prime divisor over $X$. Write $K_{Y}+\Delta_{Y}=f^{\ast}(K_{X}+\Delta)$ for some divisor $\Delta_Y$ on $Y$. Then the \textit{log discrepancy} $a(F;X,\Delta)$ of $(X,\Delta)$ at $F$ is defined as $a(F;X,\Delta):=1-\mult_{F}\Delta_{Y}$. We say that $(X,\Delta)$ is \textit{klt} if $\inf_{F}a(F;X,\Delta)>0$, where the infimum is taken over all prime divisors $F$ over $X$. For a real number $\epsilon\geq 0$, a pair $(X,\Delta)$ is \textit{$\epsilon$-lc} if $\inf_{F}a(F;X,\Delta)\ge \epsilon$ and we just say a pair $(X,\Delta)$ is \textit{lc} if it is 0-lc.
\end{definition}

To determine whether a given pair $(X,\Delta)$ is klt or $\epsilon$-lc, it suffices to compute the log discrepancies for the divisors on a fixed log resolution of $(X,\Delta)$.

\begin{definition} \label{def:plc}
Let $(X,\Delta)$ be a pair such that $-(K_X+\Delta)$ is pseudoeffective and let $F$ be a prime divisor over $X$.
\begin{enumerate}[(1)]
\item The \textit{potential log discrepancy} $\bar{a}(F;X,\Delta)$ of $(X,\Delta)$ at $F$ is defined as
$$ \bar{a}(F;X,\Delta):=a(F;X,\Delta)-\mult_F N, $$
where $F$ is a prime divisor on some model $Y$ with $f:Y\to X$ and $N$ is the negative part of $-f^{\ast}(K_{X}+\Delta)$. Since $a(F;X,\Delta)$ and $\mult_{F}$ only depend on the valuation $\ord_{F}$, potential log discrepancy also only depend on the valuation $\ord_{F}$.

\item The pair $(X,\Delta)$ is said to be \textit{potentially klt} if $\inf_F\bar{a}(F;X,\Delta)>0$ where $\inf$ is taken over all the prime divisors $F$ over $X$. For a real number $\epsilon\geq 0$, the pair $(X,\Delta)$ is said to be $\epsilon$-plc if $\inf_F\bar{a}(F;X,\Delta)\ge \epsilon$.
\end{enumerate}
\end{definition}

\begin{remark}
Unlike the usual klt or $\epsilon$-lc, to determine whether a given pair $(X,\Delta)$ with pseudoeffective $-(K_X+\Delta)$ is pklt or $\epsilon$-plc, we need to consider all the prime divisors over $X$ not just on a fixed model of $X$. However, if there exists a log resolution $f:Y\to X$ of $(X,\Delta)$ such that $-f^*(K_X+\Delta)$ admits the Zariski decomposition, then it is enough to consider the potential log discrepancies only for the divisors $F$ on such fixed $Y$ in order to determine whether the pair $(X,\Delta)$ is pklt or not (see Theorem \ref{thrm:Zariski resolution}).
\end{remark}

We need the following lemma to prove Theorem \ref{thrm:Zariski resolution}.

\begin{lemma}[\protect{\cite[Lemma 0-2-13]{KMM}}]\label{lem:KMM0-2-13}
Let $f:V\to W$ be a birational morphism of smooth projective varieties, $D$ an effective divisor on $V$, and $B$ a reduced simple normal crossing divisor on $W$. If $f^{-1}(B)\subseteq D$, then there exists an effective divisor $E$ on $V$ such that
$$ K_{V}+D=f^{\ast}(K_{W}+B)+E. $$
\end{lemma}

\begin{theorem}\label{thrm:Zariski resolution}
For a given pair $(X,\Delta)$ with pseudoeffective $-(K_X+\Delta)$, suppose that there exists a log resolution $f:Y\to X$ of $(X,\Delta)$ for which  $-f^*(K_X+\Delta)$ admits the Zariski decomposition $-f^*(K_X+\Delta)=P+N$. If $\overline{a}(G,X,\Delta)>0$ for all (finitely many) prime divisors $G\subseteq\Supp(f^{-1}_*\Delta)\cup\Exc(f)\cup\Supp N$, then the pair $(X,\Delta)$ is pklt.
\end{theorem}

\begin{proof}
We use the argument used in \cite[Lemma 0-2-12]{KMM}. Suppose that $f:Y\to X$ is a log resolution satisfying the given conditions.
We have $K_Y=f^*(K_X+\Delta)+\sum e_jF_j$ for some real numbers $e_j$ and prime divisors $F_j$ on $Y$.
Let $-f^*(K_X+\Delta)=P+N$ be the Zariski decomposition with $N=\sum \sigma_jF_j$.
Let $\delta=\min\{\overline{a}(G;X,\Delta)-1|G\subseteq\Supp(f^{-1}_*\Delta)\cup\Exc(f)\cup\Supp N\}$. Then $\delta=\min\{\overline{e}_j:=e_j-\sigma_j\}>-1$. Let $g:Z\to Y$ be a birational morphism which gives another log resolution $f\circ g:Z\to X$ of $(X,\Delta)$. Define by $B=\sum_{\bar{e}_{j}<0}F_{j}$ and $D=(g^{\ast}B)_{\text{red}}$. By Lemma \ref{lem:KMM0-2-13}, we have
$$ K_{Z}+D=g^{\ast}(K_{Y}+B)+E $$
for some effective divisor $E$. This implies that
\begin{align*}
K_{Z}&=g^{\ast}f^{\ast}(K_{X}+\Delta)+g^{\ast}\bigg(\sum_{j}e_{j}F_{j}\bigg)+g^{\ast}B+E-D\\
&=g^{\ast}f^{\ast}(K_{X}+\Delta)\\
&\phantom{=}\;\;+g^{\ast}\bigg(\sum_{\bar{e}_{j}<0}(1+e_{j})F_{j}\bigg)+g^{\ast}\bigg(\sum_{\bar{e}_{j}\ge 0}e_{j}F_{j}\bigg)+E-D.
\end{align*}
Note that we have the Zariski decomposition $-(f\circ g)^*(K_X+\Delta)=P'+N'$ where $N'=g^*(N)=g^*(\sum\sigma_i F_i)$. By definition, the potential log discrepancy $\overline{a}(G,X,\Delta)$ at a prime divisor $G$ on $Z$ is $1+$ the multiplicity of the following divisor along $G$:
\begin{align*}
&g^{\ast}\bigg(\sum_{\bar{e}_{j}<0}(1+e_{j})F_{j}\bigg)+g^{\ast}\bigg(\sum_{\bar{e}_{j}\ge 0}e_{j}F_{j}\bigg)+E-D-N'\\
=&g^{\ast}\bigg(\sum_{\bar{e}_{j}<0}(1+\bar{e}_{j})F_{j}\bigg)+g^{\ast}\bigg(\sum_{\bar{e}_{j}\ge 0}\bar{e}_{j}F_{j}\bigg)+E-D \tag{$*$}\label{$*$}.
\end{align*}
Denote by $A_{1}, A_{2}$ the first two terms of (\ref{$*$}) above, respectively. Let $G$ be a prime component of $D=(g^*B)_{\text{red}}$. Then $\mult_GD=1$. Since $\bar{e}_{j}\ge \delta$ for all $j$, we have $\mult_GA_{1}\ge 1+\delta>0$. Obviously, $\mult_GA_{2} \ge 0$ and $\mult_GE \ge 0$. Therefore,
$$ \bar{a}(G;X,\Delta)=1+\mult_GA_{1}+\mult_GA_{2}+\mult_GE-\mult_GD\ge 1+\delta>0. $$
If $G$ is not a component of $D=(g^{\ast}B)_{\text{red}}$, then $\mult_GD=0$ and $\bar{a}(G;X,\Delta)\ge 1$.
\end{proof}

The following birational invariant property is the motivation of definition of potential pairs.

\begin{proposition}[cf. \protect{\cite[Proposition 3.11]{CP16}}]\label{prop:MMP invariant}
Let $(X,\Delta)$ be an $\epsilon$-plc pair. Suppose that there is a $-(K_{X}+\Delta)$-negative map $\varphi:X\dashrightarrow X'$. Then for any prime divisor $F$ over $X$, we have
$$\bar{a}(F;X,\Delta)=\bar{a}(F;X',\Delta'),$$
where $\Delta'$ is the birational transform of $\Delta$ on $X'$.
\end{proposition}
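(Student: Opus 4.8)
The plan is to pass to a common resolution and compare the two divisorial Zariski decompositions prime divisor by prime divisor. Choose a smooth projective $W$ with projective birational morphisms $p\colon W\to X$ and $q\colon W\to X'$ resolving $\varphi$, taken so that the given prime divisor $F$ is a divisor on $W$. Since $\varphi$ is $-(K_X+\Delta)$-negative, and in particular a birational contraction, there is an effective $q$-exceptional divisor $E$ on $W$, with $\Supp E$ containing the strict transforms of all $\varphi$-exceptional divisors, such that
\[
p^{*}\bigl(-(K_X+\Delta)\bigr)=q^{*}\bigl(-(K_{X'}+\Delta')\bigr)+E .
\]
Writing $K_W+\Delta_W=p^{*}(K_X+\Delta)$ and $K_W+\Delta'_W=q^{*}(K_{X'}+\Delta')$ with the same choice of $K_W$, this says $\Delta'_W=\Delta_W+E$, whence $a(X,\Delta,F)-a(X',\Delta',F)=\mult_F E$. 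Because $\bar a(X,\Delta,F)$ and $\bar a(X',\Delta',F)$ depend only on $\ord_F$ and so may be computed on $W$, the proposition becomes equivalent to the identity of $\R$-divisors
\[
N_{\sigma}\bigl(p^{*}(-(K_X+\Delta))\bigr)=N_{\sigma}\bigl(q^{*}(-(K_{X'}+\Delta'))\bigr)+E ,
\]
i.e.\ to the assertion that the positive parts $P_{\sigma}$ of the two pullbacks agree on $W$.

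The inequality ``$\le$'' is formal: the first display exhibits $p^{*}(-(K_X+\Delta))$ as a sum of the movable class $P_{\sigma}(q^{*}(-(K_{X'}+\Delta')))$ and the effective divisor $N_{\sigma}(q^{*}(-(K_{X'}+\Delta')))+E$, and $N_{\sigma}$ of a pseudoeffective divisor is the smallest effective divisor with movable complement (by subadditivity of $\sigma_{\Gamma}$). For ``$\ge$'' it suffices to prove $E\le N_{\sigma}(p^{*}(-(K_X+\Delta)))$, since subtracting $E$ then exhibits $q^{*}(-(K_{X'}+\Delta'))$ again as movable plus effective, and the same minimality yields $N_{\sigma}(q^{*}(-(K_{X'}+\Delta')))\le N_{\sigma}(p^{*}(-(K_X+\Delta)))-E$. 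Now $E\le N_{\sigma}(p^{*}(-(K_X+\Delta)))$ is vacuous away from $\Supp E$, and for a prime divisor $\Gamma\subseteq\Supp E$, which is necessarily $q$-exceptional, it follows from the $-(K_X+\Delta)$-negativity of $\varphi$: that hypothesis forces $p^{*}(-(K_X+\Delta))$ to be negative on a family of $q$-contracted curves covering $\Gamma$, so that every effective divisor $\R$-linearly equivalent to $p^{*}(-(K_X+\Delta))$ contains $\Gamma$ with multiplicity at least $\mult_{\Gamma}E$; hence $\mult_{\Gamma}E\le\sigma_{\Gamma}(p^{*}(-(K_X+\Delta)))$. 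One runs this separately for the two model steps, $\varphi$ a divisorial contraction (where one may take $W=X$ and the curve argument is transparent) and $\varphi$ a flip (where $\Gamma$ is exceptional over both $X$ and $X'$). The hypotheses that $\varphi$ be $-(K_X+\Delta)$-negative and that $(X,\Delta)$ be $\epsilon$-plc are what make the two negative parts comparable in this way and keep the potential log discrepancies under control throughout.

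I expect the technical heart, and the main obstacle, to be precisely this last point: showing that $N_{\sigma}(p^{*}(-(K_X+\Delta)))$ and $N_{\sigma}(q^{*}(-(K_{X'}+\Delta')))+E$ agree along the $q$-exceptional divisors. This forces one to combine the negativity lemma (which makes effective $q$-exceptional divisors rigid, so that $\sigma_{\Gamma}(E)=\mult_{\Gamma}E$) with Nakayama's description of how $N_{\sigma}$ transforms under birational pullback, and to treat carefully the divisors exceptional over both $X$ and $X'$. A subsidiary point, used implicitly the moment we compute on $W$, is that $\bar a(X,\Delta,F)$ and $\bar a(X',\Delta',F)$ are independent of the chosen model, which rests on $\sigma_{\ord_F}$ being a genuine invariant of the valuation $\ord_F$. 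By contrast, the comparison of the non-$q$-exceptional parts of the two negative parts is automatic, since $\varphi^{-1}$ contracts no divisor and divisorial valuations are preserved under pushforward.
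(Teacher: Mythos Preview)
The paper itself does not prove this proposition; it simply records the statement and defers to \cite[Proposition~3.11]{CP16}. So there is no in-paper argument to compare against, and your sketch has to stand on its own.

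Your reduction is correct. With $D=-(K_X+\Delta)$, $D'=\varphi_*D$, and $p^*D=q^*D'+E$ on a common resolution $W$, the statement is equivalent to
\[
N_\sigma(p^*D)=N_\sigma(q^*D')+E,
\]
and your argument for ``$\le$'' via the minimality of $N_\sigma$ among effective divisors with movable complement is fine. The crux is indeed $E\le N_\sigma(p^*D)$.

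Here your proposed justification does not go through. The curve argument---find $q$-contracted curves $C$ covering a component $\Gamma\subseteq\Supp E$ with $p^*D\cdot C<0$, and from this read off $\mult_\Gamma D_0\ge\mult_\Gamma E$ for every effective $D_0\sim_{\R}p^*D$---is valid only when $E$ has a single component, as in a single divisorial contraction with $W=X$. In the flip case (or any step where the $q$-exceptional locus has several components) one only gets $p^*D\cdot C=E\cdot C=\sum_i e_i\,(\Gamma_i\cdot C)$, which mixes one negative term with nonnegative ones; the resulting inequality does not separate the coefficients and does not yield $a_j\ge e_j$ componentwise. You flag this as ``the main obstacle'' and gesture at the negativity lemma, but you do not actually close the gap; the parenthetical ``$\sigma_\Gamma(E)=\mult_\Gamma E$'' is true but, since $\sigma$ is sub- and not superadditive, it does not by itself give $\sigma_\Gamma(q^*D'+E)\ge\mult_\Gamma E$.

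The clean fix avoids curves altogether. For any effective $D''\sim_{\R}p^*D$ one has $q_*D''\sim_{\R}D'$ effective, and $D''-q^*q_*D''$ is $q$-exceptional and $\R$-linearly equivalent to $E$; a $q$-exceptional divisor that is numerically trivial is zero (negativity lemma), so $D''=q^*(q_*D'')+E\ge E$. This gives $\sigma_\Gamma(p^*D)\ge\mult_\Gamma E$ immediately when $D$ is big, and in the merely pseudoeffective case a perturbation by $\epsilon H$ introduces only an $O(\epsilon)$ $q$-exceptional error. One last point: the $\epsilon$-plc hypothesis plays no role in the equality beyond ensuring that $-(K_X+\Delta)$ is pseudoeffective; your remark that it is needed ``to keep the potential log discrepancies under control'' is not accurate.
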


If we assume that $\varphi$ is the $-(K_X+\Delta)$-MMP, we have $\bar{a}(F;X',\Delta')=a(F;X',\Delta')$ since $-(K_{X'}+\Delta')$ is nef.
Thus the resulting model $(X',\Delta')$ is an $\epsilon$-lc pair.

\begin{definition}
A $\Q$-factorial normal projective variety $X$ is said to be of \emph{Fano type} if there exists a boundary $\Q$-divisor $\Delta$ on $X$ such that $(X,\Delta)$ is klt and $-(K_X+\Delta)$ is ample. 
\end{definition}

The following is a characterization of Fano type varieties in terms of potential pairs.

\begin{theorem}[\protect{\cite[Theorem 5.1]{CP16}}] \label{thrm:FT criterion}
Let $X$ be a $\Q$-factorial normal projective variety.
Then $X$ is a Fano type variety if and only if $-K_X$ is big and  $(X,\Delta)$ is a pklt pair for some divisor $\Delta$.
\end{theorem}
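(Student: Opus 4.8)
I would prove the two implications separately.

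\textit{``Fano type'' $\Rightarrow$ the displayed condition.} If $X$ is of Fano type, choose a boundary $\Delta_0$ with $(X,\Delta_0)$ klt and $-(K_X+\Delta_0)$ ample. Then $-K_X=-(K_X+\Delta_0)+\Delta_0$ is a sum of an ample and an effective divisor, hence big. Moreover, for every proper birational $f\colon Y\to X$ the pullback $-f^{*}(K_X+\Delta_0)$ is nef, so its negative part $N$ vanishes; therefore $\bar{a}(X,\Delta_0,F)=a(X,\Delta_0,F)>0$ for every prime divisor $F$ over $X$, i.e. $(X,\Delta_0)$ is pklt. Taking $\Delta=\Delta_0$ finishes this (routine) direction.

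\textit{The converse.} Assume $-K_X$ is big and $(X,\Delta)$ is pklt. First I would record two reductions. (a) $(X,\Delta)$ is automatically klt, since $a(X,\Delta,F)\ge\bar{a}(X,\Delta,F)$ for every $F$ and hence $\inf_F a(X,\Delta,F)\ge\inf_F\bar{a}(X,\Delta,F)>0$. (b) The pklt property only improves when the boundary is decreased: if $\Delta_1\le\Delta$ are boundaries and $G:=\Delta-\Delta_1$, then on any model $f\colon Y\to X$ one has $N_\sigma(-f^{*}(K_X+\Delta_1))\le N_\sigma(-f^{*}(K_X+\Delta))+f^{*}G$ by the standard monotonicity of $N_\sigma$, and combining this with the identity $a(X,\Delta_1,F)-\mult_F f^{*}G=a(X,\Delta,F)$ gives $\bar{a}(X,\Delta_1,F)\ge\bar{a}(X,\Delta,F)$ for all $F$. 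Hence $(X,0)$ is pklt; set $\delta:=\inf_F\bar{a}(X,0,F)>0$. Thus it suffices to show: if $-K_X$ is big and $(X,0)$ is $\delta$-plc for some $\delta>0$, then $X$ is of Fano type.

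\textit{The main step.} The strategy is to produce an effective $\mathbb{Q}$-divisor $\Delta_0\sim_{\mathbb{Q}}-K_X$ with $(X,\Delta_0)$ klt. Granting this, $(X,\Delta_0)$ is a klt pair with $K_X+\Delta_0\sim_{\mathbb{Q}}0$ and $\Delta_0$ big, and a short perturbation argument (write $\Delta_0\sim_{\mathbb{Q}}A+E$ with $A$ ample and $E\ge0$; then $(X,(1-t)\Delta_0+tE)$ is klt for $0<t\ll1$ and $-(K_X+(1-t)\Delta_0+tE)\sim_{\mathbb{Q}}tA$ is ample) shows $X$ is of Fano type. To construct $\Delta_0$ I would set $P:=P_\sigma(-K_X)$ and, for $m$ sufficiently divisible and a general $M\in|mP|$, take $\Delta_0:=N_\sigma(-K_X)+\tfrac1m M$: here $N_\sigma(-K_X)$ has coefficients $<1$ (this is the $\delta$-plc condition at the prime divisors of $X$), for general $M$ one has $\mult_F(g^{*}M)=0$ for every prime divisor $F$ over $X$ (pullbacks of basepoint-free systems are basepoint-free), so $(X,\Delta_0)$ is klt if and only if $(X,N_\sigma(-K_X))$ is klt, and the latter would follow from the equality $N_\sigma(-g^{*}K_X)=g^{*}N_\sigma(-K_X)$ on every model $g$ (provable via the negativity lemma once $P$ is nef), since then $a(X,N_\sigma(-K_X),F)=a(X,0,F)-\mult_F N_\sigma(-g^{*}K_X)=\bar{a}(X,0,F)\ge\delta>0$.

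\textit{The obstacle.} The difficulty is precisely that $-K_X$ need not a priori admit a good (semiample) Zariski decomposition, so $P_\sigma(-K_X)$ need not be semiample or even nef, and the computation above cannot be run on $X$ directly; equivalently, one must show the $\delta$-plc hypothesis forces the asymptotic log canonical threshold $\sup_m\bigl(m\cdot\lct(X,0;|{-mK_X}|)\bigr)$ to exceed $1$. I would approach this by running the $-K_X$-MMP with scaling of an ample divisor: along such an MMP $-K$ stays big (it is the pushforward of a big divisor under divisorial contractions and flips) and the model stays $\delta$-plc by Proposition \ref{prop:MMP invariant}, so what remains is a termination/convergence statement, and the $\delta$-plc bound is what should provide the needed uniform control of singularities. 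Handling all infinitely many divisors over $X$ at once — which, as the Remark after Definition \ref{def:plc} points out, is unavoidable here — is the crux of the argument, and is exactly the point where the $\delta$-plc hypothesis is genuinely stronger than, and cannot be replaced by, mere klt-ness of $X$.
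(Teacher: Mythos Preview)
Your forward direction and the reductions (a) and (b) are correct, and the target of the main step---producing an effective $\Delta_0\sim_{\mathbb Q}-K_X$ with $(X,\Delta_0)$ klt---is exactly what is needed. The problem is the route you propose for reaching it.

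Running a $-K_X$-MMP is essentially circular here. Each step of such an MMP must contract a $-K_X$-negative, hence $K_X$-\emph{positive}, extremal ray; neither the cone theorem nor the basepoint-free theorem supplies such contractions, and the standard way to obtain them is to already know that $X$ is a Mori dream space, i.e.\ of Fano type---the very conclusion you want. The usual trick of recasting a $D$-MMP as a $(K_X+\Gamma)$-MMP with $K_X+\Gamma\sim_{\mathbb Q}t(-K_X)$ for some $t>0$ would require an effective $\Gamma\sim_{\mathbb Q}-(1+t)K_X$ with $(X,\Gamma)$ klt, and the existence of such a $\Gamma$ is exactly the statement $\sup_m\bigl(m\cdot\lct(X,0;|{-mK_X}|)\bigr)>1$ that you yourself isolate as the obstacle. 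So the ``termination/convergence statement'' you flag is not a loose end to be tidied up; it is the entire content of the theorem, and while the $\delta$-plc bound does propagate along any MMP one can run (Proposition~\ref{prop:MMP invariant}), it does not by itself manufacture the contractions.

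The paper's argument (given in the Remark following the proposition that identifies $\pnklt(X,\Delta)$ with $\mathcal Z\bigl(\mathcal J(X,\Delta;\norm{-(K_X+\Delta)})\bigr)$ for big $-(K_X+\Delta)$) avoids MMP and Zariski decompositions entirely. Since the asymptotic multiplier ideal is computed by a general $D\in|-(K_X+\Delta)|_{\mathbb Q}$, one gets
\[
\text{pklt}\ \Longrightarrow\ \pnklt(X,\Delta)=\emptyset\ \Longrightarrow\ \mathcal J(X,\Delta;D)=\mathcal O_X\ \Longrightarrow\ (X,\Delta+D)\ \text{klt},
\]
with $K_X+\Delta+D\sim_{\mathbb Q}0$, so $X$ is of Fano type. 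After your reduction (b) this applies with $\Delta=0$. This multiplier-ideal identity is precisely the mechanism that converts the pklt hypothesis---an infimum over \emph{all} divisors over $X$---into the single inequality ``asymptotic lct $>1$'' that you correctly identified as the crux, and it does so without modifying $X$ at all.
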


If $-(K_X+\Delta)$ is pseudoeffective and $D$ is a divisor on $X$, then we can define the pseudoeffective threshold as follows
$$
t_{\eff}(X,\Delta;D):=\sup\{\lambda|-(K_X+\Delta+\lambda D)\;\text{is pseudoeffective}\}.
$$

Suppose that $(X,\Delta)$ is a log canonical pair and $D$ is an effective $\mathbb{R}$-Cartier divisor on $X$. The \textit{log canonical threshold} of $D$ with respect to $(X,\Delta)$ is
$$
\lct(X,\Delta;D):=\sup\{t\in \R_{\geq0} ~|~ \text{$(X,\Delta+tD)$ is log canonical}\}.
$$

Note that the notion of pklt, plc can be defined only for the pairs $(X,\Delta+tD)$ with $0\leq t\leq t_{\eff}(X,\Delta;D)$.

\begin{definition}
Let $(X,\Delta)$ be a pair such that $-(K_X+\Delta)$ is pseudoeffective.
Assume that $(X,\Delta)$ is potentially klt.
For a divisor $D$ on $X$, suppose that $t_{\eff}=t_{\eff}(X,\Delta;D)>0$.
The \emph{plc threshold} $\plct(X,\Delta;D)$ of $(X,\Delta)$ with respect to $D$ is defined as
$$
\plct(X,\Delta;D):=\begin{cases}\sup\{\lambda\geq 0|(X,\Delta+\lambda D) \text{ is plc}\} & \text{if $(X,\Delta+t_{\eff}D)$ is not pklt}\\
t_{\eff}& \text{otherwise.}
\end{cases}
$$
If $t_{\eff}=0$, then we define $\plct(X,\Delta;D)=0$.
\end{definition}

By definition, $\plct(X,\Delta;D)\le t_{\eff}(X,\Delta;D)$. Note that if $t_{\eff}>0$ and $-(K_X+\Delta+tD)$ is nef for any $t\in[0,t_{\eff}]$, then $\plct(X,\Delta;D)=\text{lct}(X,\Delta;D)$. Furthermore, if $(X,\Delta+tD)$ is potentially klt for any $t\in[0,t_{\eff}]$, then $\plct(X,\Delta;D)= t_{\eff}(X,\Delta;D)$. Thus in some cases, the effective threshold $t_{\eff}(X,\Delta;D)$ and the log canonical threshold $\text{lct}(X,\Delta;D)$ appear as plc threshold $\plct(X,\Delta;D)$.




Our main result is a generalization of the following result.

\medskip

\begin{theorem}[\cite{HMX14}] \label{thrm:ACC}
Suppose that $n$ is a positive integer, $I$ is a subset of nonnegative real numbers satisfying DCC. Then there exists an ACC set $I'$ such that the log canonical threshold $\mathrm{lct}(X,\Delta;D)$ belongs to $I'$, whenever  the pair $(X,\Delta)$ and the divisor $D$ satisfy the followings:
\begin{enumerate}[label=\upshape(\roman*)]
\item $(X,\Delta)$ is a log canonical pair with $X$ of dimension $n$,
\item the coefficients of $\Delta$ and $D$ belong to $I$.
\end{enumerate}
\end{theorem}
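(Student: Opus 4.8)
This is Theorem~1.1 of \cite{HMX14}; I describe the shape of that proof. It extends earlier results of de~Fernex--Ein--Musta\c{t}\u{a} (who worked on a fixed, or bounded, ambient variety) and resolves a conjecture of Shokurov, the new feature being that $X$ is allowed to vary over all log canonical pairs of dimension $n$. The argument runs by induction on $n$, and it is inseparable from two companion statements carried along the same induction: the \emph{global ACC} (for log canonical pairs $(Y,\Gamma)$ of dimension $<n$ with coefficients in a fixed DCC set and $K_Y+\Gamma\equiv 0$, the coefficients of $\Gamma$ lie in a fixed finite set) and the \emph{DCC for volumes} (for log canonical pairs of dimension $<n$ with DCC coefficients, $\vol(K_Y+\Gamma)$ ranges in a DCC set, equivalently a suitable boundedness of such pairs). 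I would assume both in dimensions $<n$ and deduce the ACC for $\lct$ in dimension $n$.

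Suppose the conclusion fails in dimension $n$: there are log canonical pairs $(X_j,\Delta_j)$ and effective divisors $D_j$, the coefficients of $\Delta_j$ and $D_j$ lying in a DCC set $I$, with $t_j:=\lct(X_j,\Delta_j;D_j)$ strictly increasing to some $t_\infty$. Put $B_j:=\Delta_j+t_jD_j$; then $(X_j,B_j)$ is log canonical but $(X_j,B_j+\eta D_j)$ is not for any $\eta>0$, so there is a prime divisor $E_j$ over $X_j$ with $a(X_j,B_j,E_j)=0$ and $\ord_{E_j}D_j>0$. Working near $c_{X_j}(E_j)$ and applying a tie-breaking perturbation, I would arrange that the non-klt locus of $(X_j,B_j)$ has a \emph{unique minimal} center $W_j$, chosen so that the coefficient of $D_j$ still enters the adjunction to $W_j$.

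Now I would run adjunction to $W_j$. The canonical bundle formula for a minimal non-klt center produces, on the normalization of $W_j$, a klt pair $(W_j,\Theta_j+M_j)$ with $K_{W_j}+\Theta_j+M_j\sim_{\mathbb{R}}(K_{X_j}+B_j)|_{W_j}$, where $\Theta_j$ is the boundary part and $M_j$ the moduli part (nef on a suitable birational model, only pseudo-effective in general). The crucial --- and delicate --- point is that the coefficients of $\Theta_j$ lie in a DCC set depending only on $I$ and $n$, and that one of them records the coefficient of $D_j$, hence genuinely varies with $t_j$. Since $\dim W_j<n$, I would invoke the inductive package: disposing of the moduli part $M_j$ (this is where the boundedness statement is used) reduces us to a klt pair of dimension $<n$ with numerically trivial log canonical class and DCC coefficients, to which the global ACC applies and forces the coefficients into a finite set. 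Along a subsequence the coefficient recording $t_j$ is then eventually constant, hence so is $t_j$ --- contradicting strict monotonicity, and proving the theorem.

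The main obstacle --- and the reason the three statements must be established together --- is the control of adjunction: keeping the boundary part $\Theta_j$ inside a DCC coefficient set depending only on the given data needs the ACC/DCC package in lower dimension, while disposing of the moduli part $M_j$, which is a priori only pseudo-effective and whose Iitaka fibration varies with $j$, needs boundedness of log canonical pairs of bounded volume. The remaining ingredients --- extracting the offending valuation, tie-breaking to a unique minimal non-klt center, connectedness of non-klt loci, and the passage to DCC subsequences --- are standard once this core difficulty is handled.
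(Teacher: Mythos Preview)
The paper does not give a proof of this theorem; it is quoted from \cite{HMX14} as a known result and used as a black box in the proof of Theorem~\ref{thrm:main}. There is therefore no proof in the paper to compare against.

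As a summary of the argument in \cite{HMX14}, your sketch correctly identifies the inductive architecture: ACC for $\lct$ in dimension $n$ is proved simultaneously with global ACC and DCC for volumes in lower dimension, and one argues by contradiction from a strictly increasing sequence $t_j$. However, the adjunction step you describe is not the one Hacon--McKernan--Xu use. Rather than tie-breaking to a unique minimal non-klt centre $W_j$ and invoking Kawamata subadjunction with its moduli part $M_j$, they perform a \emph{divisorial} extraction: a suitable modification realises the valuation $E_j$ as an honest divisor on a higher model, and one restricts via Shokurov's different. This yields a boundary $\Theta_j$ on $E_j$ whose coefficients lie in the hyperstandard set $D(I\cup\{t_j\})$, with no moduli part to handle. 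Running an MMP on $E_j$ and applying global ACC in dimension $n-1$ to the resulting log Calabi--Yau or Mori-fibre structure then forces the relevant coefficient, and hence $t_j$, to stabilise. Your subadjunction route runs into two difficulties: the tie-breaking perturbation moves the coefficients out of the fixed DCC set $I$, which is exactly what the induction must preserve; and controlling the coefficients of $\Theta_j$ through the canonical bundle formula, together with ``disposing of'' $M_j$, requires input (effective b-semiampleness, uniform control across varying bases) that is not available in the generality needed. This is why \cite{HMX14} avoids that route.
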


\subsection{Finite generation}\hfill

Let $X$ be a normal projective variety. For an $\mathbb{R}$-divisor $D$ on $X$, we consider the group of global sections of $D$:
$$H^{0}(X,D)=\{f\in \mathbb{C}(X) ~|~ \mathrm{div}f+D\ge 0\}\cup \{0\}.$$
Let $D_{1},\cdots,D_{r}$ be given fixed $\mathbb{Q}$-divisors on $X$. Then we define the section ring associated to $D_{1},\cdots,D_{r}$ as
\[R=R(X;D_{1},\dots,D_{r}):=\bigoplus_{(m_{1},\cdots,m_{r})\in \mathbb{N}^{r}}H^{0}(X,m_{1}D_{1}+\cdots+m_{r}D_{r})\]
and the \textit{support} of $R$ is defined as
$$\Supp R=\left\{\left.D\in\sum\mathbb{R}_{\ge0}D_{i} ~\right|~ |D|_{\mathbb{R}}\neq \emptyset \text{ in } \Div_{\mathbb{R}}(X)\right\}.$$

\begin{theorem}[\cite{KKL16}]\label{thrm:geography}
Let $X$ be a normal projective variety and let $D_{1},\cdots, D_{r}$ be $\mathbb{Q}$-Cartier $\mathbb{Q}$-divisors on $X$. Assume that the section ring $R=R(X;D_{1},\dots,D_{r})$ is finitely generated. Then the followings hold:
\begin{enumerate}[label=\upshape(\arabic*)]
\item $\Supp R$ is a rational polyhedral cone.
\item Suppose that $\Supp R$ contains a big divisor. If $D\in \sum\mathbb{R}_{\ge0}D_{i}$ is pseudoeffective, then $D$ is in $\Supp R$.
\item There is a finite decomposition $\Supp R=\bigcup \mathcal{C}_{i}$ such that each $\mathcal{C}_{i}$ is a rational polyhedral cone, $\sigma_{\Gamma}$ is linear on $\mathcal{C}_{i}$ for every prime divisor $\Gamma$ over $X$, and the cones $\mathcal{C}_{i}$ form a fan.
\item There is a positive integer $d$ and a resolution $f:Y\to X$ such that $\Mob f^{\ast}(dD)$ is base point free for every $D\in \Supp R\cap \Div(X)$, and $\Mob f^{\ast}(kdD)=k\Mob f^{\ast}(dD)$ for every positive integer $k$.
\end{enumerate}
\end{theorem}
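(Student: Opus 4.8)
The plan is to derive all four assertions from the single hypothesis that $R=R(X;D_1,\dots,D_r)$ is a finitely generated $\C$-algebra, using that $R$ is an integral domain (each $H^0(X,\sum_i m_iD_i)$ sits inside $\C(X)$) together with Nakayama's asymptotic valuations. For (1) I would fix nonzero homogeneous algebra generators $s_1,\dots,s_N$ of multidegrees $\delta_1,\dots,\delta_N\in\N^r$; since $R$ is a domain no monomial in the $s_j$ vanishes, so the degree monoid $\{m\in\N^r : R_m\neq0\}$ equals the finitely generated monoid $\sum_j\N\delta_j$, whose real cone is rational polyhedral. As each $\delta_j$ has an effective $\Q$-divisor representative, every class in this cone (under $m\leftrightarrow\sum_i m_iD_i$) has an effective $\R$-divisor representative, so the cone is contained in $\Supp R$. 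For the reverse inclusion, given $D=\sum_i t_iD_i\sim_{\R}D'\ge 0$ I would write $D'-D$ as a real combination of finitely many principal divisors, view ``$D'\ge0$'' as a rational polyhedral cone in the joint space of the $t_i$ and those coefficients, project onto the $t_i$ (Fourier--Motzkin preserves rational polyhedrality), and note that each rational point of the image comes from a rational solution, hence, after clearing denominators, lies in the degree monoid. Thus $\Supp R$ is exactly that rational polyhedral cone; in particular it is closed.

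Assertion (2) is then formal: if $B\in\Supp R$ is big and $D\in\sum_i\R_{\ge0}D_i$ is pseudoeffective, then $D+\varepsilon B\in\sum_i\R_{\ge0}D_i$ is big for every $\varepsilon>0$, hence has an effective representative, hence lies in $\Supp R$; letting $\varepsilon\to0$ and using closedness of $\Supp R$ gives $D\in\Supp R$. For (4), I would use Gordan's lemma to pick finitely many monoid generators $\mu_1,\dots,\mu_s$ of $\Supp R\cap\Div(X)$; for each $l$ the one-parameter section ring $\bigoplus_k H^0(X, kd\sum_i(\mu_l)_iD_i)$ is a finitely generated subring of $R$, so by the classical theory there is a resolution and an integer annihilating the accumulation of its base locus. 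Taking $f:Y\to X$ to dominate all these resolutions and $d$ divisible by all these integers, $\Mob f^{*}(d\mu_l)$ is base point free and $\Mob f^{*}(kd\mu_l)=k\,\Mob f^{*}(d\mu_l)$. Finite generation of $R$ then propagates this to any integral $D\in\Supp R$: such $D$ is an $\N$-combination of the $\mu_l$, and (after making $d$ divisible enough that $R^{(d)}$ is generated by its $\mu_l$-graded pieces) the base ideal of $f^{*}(dD)$ is exactly the one forced by the products of the $f^{*}(d\mu_l)$, so $\Mob f^{*}(dD)$ is a nonnegative combination of base point free divisors and scales linearly.

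For (3): with $f$ and $d$ as above, $\tfrac1d\Mob f^{*}(dD)$ is nef for every integral $D\in\Supp R$, so $f^{*}D$ has a good Zariski decomposition on $Y$ with negative part $N(D)=\tfrac1d\big(f^{*}(dD)-\Mob f^{*}(dD)\big)$, and the finitely many prime divisors occurring in any such $N(D)$ all lie on $Y$ (they are controlled by the $\Fix$ of the $f^{*}(d\mu_l)$ and by $\Exc(f)$). Since $N(D)$ equals its own negative part, for any prime divisor $\Gamma$ over $X$ one gets $\sigma_\Gamma(D)=\ord_\Gamma(g^{*}N(D))$ for a model $g:Y_\Gamma\to Y$ carrying $\Gamma$, which is a fixed nonnegative linear form in the coefficients of $N(D)$. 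It therefore suffices to decompose $\Supp R$ into finitely many rational polyhedral subcones, forming a fan, on each of which the support of $N(D)$ is constant and its coefficients vary linearly in $D$; then every $\sigma_\Gamma$ is linear there. This Zariski (Mori) chamber decomposition is precisely what finite generation of $R$ yields, via the standard variation-of-GIT analysis of the finitely many birational models defined by the Veronese subrings of $R$.

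The main obstacle is exactly this last point — turning the purely algebraic hypothesis ``$R$ is finitely generated'' into a \emph{finite} rational polyhedral fan on $\Supp R$ together with model-independent, piecewise-linear control of the divisorial Zariski decomposition (assertions (4) and (3)); steps (1) and (2) are essentially bookkeeping once one knows $R$ is a domain and $\Supp R$ is closed.
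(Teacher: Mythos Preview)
The paper does not give its own proof of this theorem: it is quoted verbatim from \cite{KKL16} and used as a black box in Section~3. There is therefore nothing in the paper to compare your argument against.

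That said, a few remarks on the proposal itself. Your arguments for (1) and (2) are essentially the standard ones and are fine, modulo some care in (1) with the $\R$-linear vs.\ $\Q$-linear equivalence bookkeeping. For (4), the step ``finite generation of $R$ then propagates this to any integral $D\in\Supp R$'' is exactly the delicate point and is not handled by what you wrote: knowing that $\Mob f^*(d\mu_l)$ is free and scales linearly for each monoid generator $\mu_l$ does \emph{not} by itself give $\Mob f^*(dD)=\sum_l a_l\,\Mob f^*(d\mu_l)$ for $D=\sum_l a_l\mu_l$, because $\Mob$ is only superadditive in general. One needs to argue via a Veronese subring generated in a single multidegree (or equivalently via the induced fan structure) to force equality; this is precisely what the proof in \cite{KKL16} does. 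Similarly, in (3) you correctly identify that everything reduces to the existence of a finite rational polyhedral fan on which $D\mapsto N(D)$ is linear, and you correctly flag this as the main obstacle, but the sentence ``this Zariski (Mori) chamber decomposition is precisely what finite generation of $R$ yields, via the standard variation-of-GIT analysis'' is a citation, not an argument. So your outline is accurate as a roadmap, but the two substantive assertions (3) and (4) are not proved here; for those you would need to reproduce the actual content of \cite{KKL16}.
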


\section{Proofs of Theorems}

We first prove the ACC property of pseudoeffective thresholds $t_{\eff}(X,\Delta;D)$.

\begin{proof}[Proof of Theorem \ref{thrm:main t_eff}]
Since $(X,\Delta)$ is $\epsilon$-plc and $-(K_{X}+\Delta)$ is big, $X$ is of Fano type by Theorem \ref{thrm:FT criterion}. Therefore the section ring $R=R(X;-(K_{X}+\Delta),-(K_{X}+\Delta+t_{\eff}D))$ is finitely generated. For a log resolution $f:Y\to X$ as in Theorem  \ref{thrm:geography} (4), let $-f^{\ast}(K_{X}+\Delta+tD)=P(t)+N(t)$ be the good Zariski decomposition. Then $P(t)$ is semiample for all $0\le t\le t_{\eff}=t_{\eff}(X,\Delta;D)$. Note that $t_{\eff}>0$ because $-(K_{X}+\Delta)$ is big. Take a sufficiently small $\delta>0$ and let $g:Y\to X'$ be the semiample fibration associated to $P(t_{\eff}-\delta)$. Then by \cite[Theorem 4.2]{KKL16}, the induced birational map $\varphi:X\dashrightarrow X'$ is the ample model of $-(K_{X}+\Delta+tD)$ for all $t_{\eff}-\delta\le t<t_{\eff}$, and at the same time, it is also a semiample model of $-(K_{X}+\Delta+t_{\eff}D)$.  Let $h:X'\to Z'$ be the semiample fibration associated to $-(K_{X'}+\Delta'+t_{\eff}D')=-\varphi_{\ast}(K_{X}+\Delta+t_{\eff}D)$ and let $F'$ be a general fiber of $h$. Note that $\dim F'>0$ because $-(K_{X'}+\Delta'+t_{\eff}D')$ is not big.

Now we claim that $F'$ belongs to some bounded family.
Let $K_{Y}+\Delta_{Y}=f^{\ast}(K_{X}+\Delta)$, $K_{Y}+\Delta_{Y}+E_{Y}=g^{\ast}(K_{X'}+\Delta')$ and $P+N=-f^{\ast}(K_{X}+\Delta)$ the good Zariski decomposition. Then we can write $-P-N+E_{Y}=g^{\ast}(K_{X'}+\Delta')$. Since $D$ is nef, $N\le N(t_{\eff})$. By \cite[Lemma 4.1]{KKL16}, $N(t_{\eff})$ is $g$-exceptional and so is $N$. Moreover, by definition, $E_{Y}$ is $g$-exceptional. Thus, by the negativity lemma, $E_{Y}\le N$.

By assumption (i), we know that $(X',\Delta')$ is $\epsilon$-lc. Thus the pair $(F',\Delta'|_{F'})$ is also $\epsilon$-lc. Now consider the restriction
\[-(K_{X'}+\Delta'+t_{\eff}D')|_{F'}=-(K_{F'}+\Delta'|_{F'}+t_{\eff}D'|_{F'})\equiv 0.\]
Since \[\delta D'|_{F'}\equiv -(K_{F'}+\Delta'|_{F'}+(t_{\eff}-\delta)D'|_{F'})=-(K_{X'}+\Delta'+(t_{\eff}-\delta)D')|_{F'}\]
and the right-hand side is ample, $D'|_{F'}$ is ample. This implies that $-(K_{F'}+\Delta'|_{F'})$ is ample. By \cite[Theorem 1.1]{Bir21}, $F'$ belongs to a bounded family $\mathcal{F}$.

Finally by arguing as in the proof of \cite[Theorem 1.3]{HL17}, we finish the proof as follows. Note first that we can write for some very ample Cartier divisor $M_{F'}$ on $F'$
$$
t_{\eff}=\frac{1}{d}\left(-K_{F'}\cdot M_{F'}^{\dim F'-1}-\Delta'|_{F'}\cdot M_{F'}^{\dim F'-1}\right)
$$
where $d=D'|_{F'}\cdot M_{F'}^{\dim F'-1}$.
By the boundedness of $F'$, we may assume that $-K_{F'}\cdot M_{F'}^{\dim F'-1}\le r$ for some fixed number $r>0$. It is easy to see that $d=D'|_{F'}\cdot M_{F'}^{\dim F'-1}$ belongs to a DCC set (e.g., $I':=\{a\cdot m|a\in I, m\in \mathbb N\}$). Note also that $\Delta'|_{F'}\cdot M_{F'}^{\dim F'-1}$ belongs to the DCC set $I'$. Therefore, it is immediate that $t_{\eff}$ belongs to some ACC set.
\end{proof}

Due to the following example, the $\epsilon$-plc condition on $(X,\Delta)$ with $\epsilon>0$ in Theorem \ref{thrm:main t_eff} cannot be strengthened to plc condition (i.e., $\epsilon=0$).

\begin{example}\label{eg:e=0}
Let $X_{n}=\mathbb{P}_{\mathbb{P}^{1}}(\mathcal{O}\oplus \mathcal{O}(n))$ be a Hirzebruch surface, $M$ the negative section, and $F$ a $\mathbb{P}^{1}$-fiber. Then $-K_{X_{n}}=((1+\frac{2}{n})M+(n+2)F)+(1-\frac{2}{n})M$ is the Zariski decomposition with the negative part $N=(1-\frac{2}{n})M$. By Theorem \ref{thrm:Zariski resolution}, we know that $X_{n}$ is pklt. If we let $D_n=M+(n+3)F$, then one can see that $t_{\eff}(X_{n},0,D_n)=\frac{n+2}{n+3}$. Clearly, $\left\{\frac{n+2}{n+3}\left|n\in\mathbb N\right.\right\}$ does not satisfy ACC. This shows that the condition being $\epsilon$-plc in Theorem \ref{thrm:main t_eff} is necessary. Note that $\bar{a}(M;X_{n},0)=\frac{2}{n}$.
\end{example}

The following result is used in the proof of Theorem \ref{thrm:main}.

\begin{theorem}[\protect{\cite[Theorem 1.5]{HMX14}}] \label{thrm:ACC2}
Fix a positive integer $n\in \mathbb{N}$ and a DCC set $I\subseteq [0,1]$. Then there is a finite subset $I_{0}\subseteq I$ such that if
\begin{enumerate}[label=\upshape(\arabic*)]
\item $X$ is a projective variety of dimension $n$,
\item $(X,\Delta)$ is log canonical,
\item $\Delta=\sum \delta_{i}\Delta_{i}$,  where $\delta_{i}\in I$,
\item $K_{X}+\Delta\equiv 0$,
\end{enumerate}
then $\delta_{i}\in I_{0}$.
\end{theorem}

Now we prove our main result Theorem \ref{thrm:main}.

\begin{proof}[Proof of Theorem \ref{thrm:main}]
Note that by definition, we have $\plct(X,\Delta;D)\leq t_{\eff}(X,\Delta;D)$. We will prove the ACC of the $\plct(X,\Delta;D)$ for the following two cases separately; $\Sigma=\Sigma_1\cup\Sigma_2$ where
$$
\begin{array}{l}
\Sigma_1:=\{\lambda_1|\lambda_1=\plct(X,\Delta;D)<t_{\eff}(X,\Delta;D)\}, \\
\Sigma_2:=\{\lambda_2|\lambda_2=\plct(X,\Delta;D)=t_{\eff}(X,\Delta;D)\}.
\end{array}
$$

Suppose first that $\lambda_{1}\in\Sigma_1$, that is, $\lambda_{1}=\plct(X,\Delta;D)<t_{\eff}(X,\Delta;D)=t_{\eff}$. Since $X$ is of Fano type by Theorem \ref{thrm:FT criterion}, the ring $R(X,-(K_{X}+\Delta), -(K_{X}+\Delta+\lambda_{1}D))$ is finitely generated. Take a resolution $f:Y\to X$  as in Theorem \ref{thrm:geography} (4). Let $-f^{\ast}(K_{X}+\Delta+\lambda D)=P(\lambda)+N(\lambda)$ be the good Zariski decomposition for $0\leq \lambda\leq \lambda_{1}$. There exists a semiample fibration $g:Y\to X'$ associated to all $P(\lambda_{1}-\delta)$ with sufficiently small $\delta>0$. By \cite[Theorem 4.2]{KKL16}, the induced rational map $\varphi:X\dashrightarrow X'$ is a semiample model for $-(K_{X}+\Delta+\lambda D)$ for any $\lambda\in[\lambda_{1}-\delta,\lambda_{1}]$. Let $K_{X'}+\Delta'+\lambda D'=\varphi_{\ast}(K_{X}+\Delta+\lambda D)$. Then $K_{Y}+\Delta_{Y}+\lambda f^{\ast}D+N(\lambda)=g^{\ast}(K_{X'}+\Delta'+\lambda D')$. By Remark \ref{remark_pklt}, $(X,\Delta+\lambda D)$ is pklt if and only if the coefficients of $\Delta_{Y}+\lambda f^{\ast}D+N(\lambda)$ are less than 1. Therefore we obtain that $\plct(X,\Delta;D)=\lct(X',\Delta';D')$. Hence by Theorem \ref{thrm:ACC}, the set $\Sigma_{1}=\{\lambda_1 ~|~ \lambda_1=\plct(X,\Delta;D)<t_{\eff}(X,\Delta;D)\}$ satisfies ACC.

Now, consider the set $\Sigma_{2}=\{\lambda_2 ~|~ \lambda_{2}=\plct(X,\Delta;D)=t_{\eff}(X,\Delta;D)\}$. We will show that $\Sigma_{2}$ also satisfies ACC. For $\lambda_2\in \Sigma_{2}$, take $X, X',\Delta,\Delta', D$ and $D'$ as above. Then, $\lambda_2=\plct(X,\Delta;D)<\lct(X',\Delta';D')$, because $(X,\Delta+\lambda_{2}D)$ is pklt. Therefore we cannot apply Theorem \ref{thrm:ACC}. Instead, we proceed using the following argument.

Let $t_{1}\le t_{2}\le \cdots$ be a nondecreasing sequence of $\Sigma_{2}$. Then for each $i$, there are a variety $X'_{i}$ and divisors $\Delta'_{i},D'_{i}$ associated to $t_{i}$ as we constructed before. Let $h_{i}:X'_{i}\to Z'_{i}$ be the semiample fibration associated to $-(K_{X'_{i}}+\Delta'_{i}+t_{i}D'_{i})$ and $F'_{i}$ a general fiber of $h_{i}$. Then $(F'_{i},\Delta'_{i}|_{F'_{i}}+t_{i}D'_{i}|_{F'_{i}})$ is klt and
$$ K_{F'_{i}}+\Delta'_{i}|_{F'_{i}}+t_{i}D'_{i}|_{F'_{i}}\equiv 0. $$
By passing to a subsequence, we may assume that $\dim F'_{i}$ is constant. Since $\{t_{i}\}$ is nondecreasing, the coefficients of $\Delta'_{i}|_{F'_{i}}+t_{i}D'_{i}|_{F'_{i}}$ belong to a DCC set. By Theorem \ref{thrm:ACC2}, the coefficients of $\Delta'_{i}|_{F'_{i}}+t_{i}D'_{i}|_{F'_{i}}$, indeed, belong to a finite set. It follows that a sequence $\{t_{i}\}$ stabilizes.

Since $\Sigma=\Sigma_{1}\cup \Sigma_{2}$ and union of two ACC sets again satisfies ACC, as we desired, $\Sigma$ satisfies ACC.
\end{proof}

Lastly, we state the following relation to the threshold concerning the $\R$-complement of Shokurov.
We recall from \cite{sho} that a pair $(X,\Delta)$ is said to have an \emph{$\R$-complement} if there exists an effective divisor $\Delta'$ such that $\Delta'\ge \Delta$, $(X,\Delta')$ is lc, and $K_{X}+\Delta'\sim_{\R}0$.

\begin{proposition}\label{prop:plct=rclct}
Suppose that $(X,\Delta)$ is a pklt pair such that $-(K_{X}+\Delta)$ is big.
Then
$$
\plct(X,\Delta;D)=\sup\{t\geq 0\;|\;(X,\Delta+tD)\;\text{ has an $\R$-complement}\}.
$$
\end{proposition}

The threshold on the righthand side is called the \emph{$\R$-complement threshold} in  \cite{sho} and \emph{$\R$-complementary threshold} in \cite{HLS}.

\begin{proof}
Note that by Theorem \ref{thrm:FT criterion} the variety $X$ in the given pair $(X,\Delta)$ is a Fano type variety.
We claim that if $(X,\Delta)$ is an lc pair with a Fano type variety $X$, $(X,\Delta)$ has an $\R$-complement if and only if $-(K_{X}+\Delta)$ is pseudoeffective and $(X,\Delta)$ is plc.

Let $\Delta'$ be an $\R$-divisor on $X$ such that $\Delta'\ge \Delta$, $(X,\Delta')$ is lc, and $K_{X}+\Delta'\sim_{\R}0$. Then, clearly, $-(K_{X}+\Delta)$ is pseudo-effective. Furthermore, since $-(K_{X}+\Delta')$ is $\R$-trivial, $(X,\Delta')$ is plc and hence $(X,\Delta)$ is also plc. Conversely, assume that $-(K_{X}+\Delta)$ is pseudoeffective and $(X,\Delta)$ is plc. Since $X$ is of Fano type, we can find a resolution $f:Y\to X$ such that $-f^{\ast}(K_{X}+\Delta)$ admits the good Zariski decomposition $-f^{\ast}(K_{X}+\Delta)=P+N$ with the semiample positive part $P$. Pick a general element $P'\in |P|$ and let $\Delta':=\Delta+f_{\ast}P'+f_{\ast}N$. Then it is easy to check that $\Delta'$ is an $\R$-complement of $(X,\Delta)$. By \cite[Theorem 6]{sho}, we have
$$
\plct(X,\Delta;D)=\sup\{t\geq 0\;|\;(X,\Delta+tD)\;\text{ has an $\R$-complement}\}.
$$
\end{proof}

Now we see that Theorem \ref{thrm:main} is also implied by  \cite[Theorem 21]{sho} and \cite[Theorem 8.20]{HLS}.


\section{Boundedness of $\epsilon$-plc pairs}\label{sec4}

It is known that the ACC of log canonical thresholds (Theorem \ref{thrm:ACC}) is implied by the following Birkar's result on the boundedness of $\epsilon$-lc Fano varieties (\cite{HMX14}).

\begin{theorem}[{\cite[Theorem 1.1]{Bir21}}]\label{thrm:BAB}
Let $d$ be a natural number and $\epsilon$ a positive real number. Then the projective varieties $X$ such that
\begin{enumerate}[label=$\bullet$]
\item $(X,\Delta)$ is $\epsilon$-lc of dimension $d$ for some boundary $\Delta$, and
\item $-(K_{X}+\Delta)$ is nef and big,
\end{enumerate}
form a bounded family.
\end{theorem}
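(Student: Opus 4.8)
Since the statement is the Borisov--Alexeev--Borisov conjecture, I cannot give a self-contained argument; the plan is to follow Birkar's strategy \cite{Bir21}. Fix $d$ and $\epsilon$. First observe that, as $\Delta\ge 0$ and $-(K_X+\Delta)$ is nef and big, $-K_X$ is big and $X$ is itself $\epsilon$-lc, so it suffices to bound the family of such $X$. The plan is to reduce this to two numerical bounds — an upper bound for the anticanonical volume $\vol(-K_X)$ and a bound on the Cartier index of $K_X$ — because once one produces a boundary $\Delta^+$ with coefficients in a fixed finite set for which $(X,\Delta^+)$ is klt and $K_X+\Delta^+\sim_{\Q}0$, with $\vol(-K_X)$ and the index of $K_X$ bounded, the boundedness results of Hacon--McKernan--Xu \cite{HMX14} show that the pairs $(X,\Delta^+)$, and hence the varieties $X$, form a bounded family. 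So the task reduces to constructing such a $\Delta^+$ and bounding the two invariants.

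First I would establish effective birationality together with the volume bound, by induction on $d$: there should be $m=m(d,\epsilon)$ with $|-mK_X|$ birational and $M=M(d,\epsilon)$ with $\vol(-K_X)\le M$. For birationality, using that $-K_X$ is big I would produce, for a general point $x\in X$, an effective $\Q$-divisor numerically proportional to $-K_X$ with small coefficient and an isolated non-klt center at $x$, by the usual tie-breaking and cutting-down technique; the smallness of the coefficient is exactly what an upper bound on $\vol(-K_X)$ supplies. The volume bound itself I would prove by the same induction: if $\vol(-K_X)$ were too large one could create a positive-dimensional non-klt center, restrict by adjunction to a lower-dimensional Fano-type pair that is again $\epsilon'$-lc for a controlled $\epsilon'$, and recurse. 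The crucial subtlety is that the restricted object is a \emph{generalized polarized pair}, not an honest pair, so the whole induction must be run in the category of generalized pairs.

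Next I would invoke Birkar's theorem on bounded complements: there is $n=n(d)$ and a boundary $\Delta^+\ge\Delta$ with $(X,\Delta^+)$ lc and $n(K_X+\Delta^+)\sim 0$, and since $X$ is $\epsilon$-lc one can arrange $(X,\Delta^+)$ to be klt with coefficients in a finite set depending only on $d$. As $\vol\bigl(-(K_X+\Delta)\bigr)\le\vol(-K_X)$ is now bounded, \cite{HMX14} yields that the pairs $(X,\Delta^+)$ lie in a bounded family; forgetting $\Delta^+$ gives boundedness of the $X$, and boundedness of $(X,\Delta^+)$ together with $n(K_X+\Delta^+)\sim 0$ gives the bound on the Cartier index of $K_X$, closing the loop.

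The hard part is the first step. Effective birationality, the volume upper bound, and boundedness of complements cannot be proved in sequence: each is bootstrapped against the lower-dimensional instances of the others, and the adjunction that feeds the induction genuinely leaves the category of honest pairs, so complements, effective birationality, and BAB itself must be developed for generalized pairs with a nef part. Controlling the accumulated ``defect'' boundaries, keeping track of the DCC of coefficients under adjunction — which is where ACC-type inputs such as Theorem \ref{thrm:ACC} re-enter — and descending from a $\Q$-factorial dlt model back to the original $X$ are the technical heart of the argument.
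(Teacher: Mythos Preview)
The paper does not give its own proof of Theorem~\ref{thrm:BAB}; the statement is quoted verbatim from \cite[Theorem~1.1]{Bir21} and used as a black box in Section~4. So there is nothing in the paper to compare your proposal against: the authors simply cite Birkar's result.

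Your write-up is a fair high-level summary of Birkar's strategy in \cite{Bir19,Bir21} (effective birationality and anticanonical volume bounds by induction on dimension, boundedness of complements, then log boundedness via \cite{HMX14}), and you correctly flag that the induction forces one into the generalized-pair setting. But it remains a sketch rather than a proof, and a couple of points are slightly off: the complement theorem in \cite{Bir19} first produces an \emph{lc} $n$-complement, and upgrading to a klt one (needed to feed into \cite{HMX14} or \cite{HX15}) is a separate step; also, the Cartier-index bound is an output of boundedness rather than an independent input to it. For the purposes of this paper, however, a citation to \cite{Bir21} is exactly what is expected.
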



Let $d$ be a positive integer and $\epsilon$ a nonnegative real number. Let us consider the following collections of Fano type varieties:

$$
\begin{array}{rl}
\FT(d,\epsilon)&:=\left\{X\left| \begin{array}{l}
                      \text{$X$ is a $\Q$-factorial normal projective variety of dimension $d$,} \\
                      \text{$(X,\Delta)$ is $\epsilon$-lc and $-(K_X+\Delta)$ is nef and big for some boundary $\Delta$}
                    \end{array}\right.\right\},\\
                    &\\
\pFT(d,\epsilon)&:=\left\{X\left| \begin{array}{l}
                      \text{$X$ is a $\Q$-factorial normal projective variety of dimension $d$,} \\
                      \text{$(X,\Delta)$ is $\epsilon$-plc and $-(K_X+\Delta)$ is big for some boundary $\Delta$}
                    \end{array}\right.\right\}.
\end{array}
$$
We will write $\FT(d,0)=\FT(d)$ and $\pFT(d,0)=\pFT(d)$.

\bigskip

By definition, $\FT(d,\epsilon)\subseteq\pFT(d,\epsilon)$ for any $\epsilon\geq 0$ and the following holds: $$\FT(d)=\bigcup_{\epsilon>0}\FT(d,\epsilon)=\bigcup_{\epsilon>0}\pFT(d,\epsilon).$$
Theorem \ref{thrm:BAB} proves that $\FT(d,\epsilon)$ is bounded for any $\epsilon>0$.
It is well known that the Fano type varieties are Mori dream spaces by \cite{bchm}.
Thus the varieties $X\in\FT(d,\epsilon)$ can be understood as the images of some $X'\in\pFT(d,\epsilon)$ that are obtained by running the Minimal model program on some $\epsilon$-lc pair $(X',\Delta)$.

\begin{theorem}
Let $d$ be a positive integer and $\epsilon$ a positive real number. Then $\pFT(d,\epsilon)$ is a bounded family.
\end{theorem}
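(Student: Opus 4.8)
The plan is to reduce the boundedness of $\pFT(d,\epsilon)$ to the already-known boundedness of $\FT(d,\epsilon)$ (Theorem \ref{thrm:BAB}) by running an MMP that converts a big anticanonical pair into a nef and big one without worsening the $\epsilon$-lc singularities. So suppose $X\in\pFT(d,\epsilon)$, witnessed by a boundary $\Delta$ with $(X,\Delta)$ being $\epsilon$-plc and $-(K_X+\Delta)$ big. By Theorem \ref{thrm:FT criterion}, $X$ is of Fano type, hence a Mori dream space by \cite{bchm}, so we may run the $-(K_X+\Delta)$-MMP and it terminates with a model $\varphi\colon X\dashrightarrow X'$ on which $-(K_{X'}+\Delta')$ is nef (here $\Delta'$ is the birational transform of $\Delta$). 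Since each step is $-(K_X+\Delta)$-negative, Proposition \ref{prop:MMP invariant} gives $\bar a(X,\Delta,F)=\bar a(X',\Delta',F)$ for all prime divisors $F$ over $X$; as $-(K_{X'}+\Delta')$ is nef, its negative part vanishes, so $\bar a(X',\Delta',F)=a(X',\Delta',F)$, and therefore $(X',\Delta')$ is $\epsilon$-lc. Moreover $-(K_{X'}+\Delta')$ is still big (bigness is preserved under the birational maps of an MMP on a Mori dream space, as $\varphi$ is an isomorphism in codimension one followed by divisorial contractions, or because the pushforward of a big divisor by a birational map between normal projective varieties remains big), so $X'\in\FT(d,\epsilon)$.

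At this point Theorem \ref{thrm:BAB} tells us the set of all such $X'$ is bounded. The remaining task is to deduce that the original varieties $X$ form a bounded family as well. The key observation is that $\varphi\colon X\dashrightarrow X'$ is not an arbitrary birational map: it is a composition of steps of a $-(K_X+\Delta)$-MMP on a Fano type (hence Mori dream) variety. In particular there is only a bounded combinatorial supply of such MMPs once $X'$ is fixed: the relevant birational contractions and flips are all governed by the finitely many Mori chambers in the Néron–Severi space, whose structure is itself constrained by the boundedness of $X'$. Concretely, one runs the argument in reverse: by boundedness of $\FT(d,\epsilon)$ there is a bounded family $\mathcal{X}'\to T$ containing every such $X'$, and after stratifying $T$ and passing to a log resolution of the total space one can bound the Mori dream structure (the Cox ring, or equivalently the finite fan of chambers) of the fibers; the varieties $X$ appearing in $\pFT(d,\epsilon)$ are then the ample models / small modifications associated to the finitely many chambers, hence they too fit into finitely many bounded families. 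This is exactly the type of argument used to show that small $\Q$-factorial modifications of a bounded family of Mori dream spaces stay bounded.

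I expect the main obstacle to be precisely this last step — controlling the birational maps $\varphi$ uniformly. It is not formally true that an arbitrary Mori dream space $X$ with $X'$ in a bounded family must itself be bounded; one genuinely needs that $X$ is connected to $X'$ by an MMP of the special kind above and that, along this MMP, the intermediate varieties have $\epsilon$-lc singularities for the transformed boundary (which again follows from Proposition \ref{prop:MMP invariant}, since at every intermediate stage the pair is still $\epsilon$-plc and the negative part can only be supported on the divisors yet to be contracted). The clean way to package this is: (1) bound $X'$ via Theorem \ref{thrm:BAB}; (2) bound the total number and type of extremal contractions/flips by bounding the polytope $\Mov(X')$ together with its chamber decomposition, using that the boundary coefficients and the relevant divisors lie in a fixed finite-dimensional space once $X'$ is bounded; (3) conclude that each $X\in\pFT(d,\epsilon)$ is one of finitely many small birational modifications of a member of the bounded family of $X'$'s, and that these modifications sweep out a bounded family. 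Alternatively, if one is willing to invoke the strong form of boundedness of complements or of generalized pairs from Birkar's work, one can instead produce directly a bounded complement $\Delta^+\ge \Delta$ with $(X,\Delta^+)$ being $\epsilon'$-lc (for some $\epsilon'$ depending only on $d,\epsilon$) and $-(K_X+\Delta^+)$ nef and big, placing $X$ in $\FT(d,\epsilon')$ in one stroke; then Theorem \ref{thrm:BAB} applied with $\epsilon'$ finishes the proof without any reverse-MMP bookkeeping.
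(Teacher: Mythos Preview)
Your opening move---run an anticanonical MMP to a model $X'$ on which the anti-log-canonical divisor is nef and big, deduce that the resulting pair is $\epsilon$-lc via Proposition~\ref{prop:MMP invariant}, and invoke Theorem~\ref{thrm:BAB} to bound $X'$---is exactly what the paper does. (The paper runs the $-K_X$-MMP rather than the $-(K_X+\Delta)$-MMP; this is harmless because $(X,0)$ is again $\epsilon$-plc with $-K_X$ big, and it makes the complement step below cleaner.)

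Where you diverge is in recovering boundedness of $X$ from that of $X'$. Your option~(a), bounding the Mori chamber decomposition of $X'$ and reading $X$ off as one of the associated models, has a genuine gap: the map $\varphi\colon X\dashrightarrow X'$ may contract divisors, so $X$ is in general \emph{not} a small $\Q$-factorial modification of $X'$ and does not appear among the chambers of $N^1(X')$ or inside $\Mov(X')$. To reverse the divisorial contractions you would have to bound which prime divisors can be extracted over $X'$ with controlled discrepancy, and that is essentially the whole problem again; the chamber bookkeeping on $X'$ alone does not supply this.

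Your option~(b) is the paper's actual route. Concretely: once $X'$ is bounded there is a fixed $n$ with $-nK_{X'}$ Cartier and, by effective base point freeness, free; a general $H'\in|-nK_{X'}|$ gives a klt $n$-complement $K_{X'}+\tfrac{1}{n}H'\sim_\Q 0$, which pulls back along the $-K_X$-MMP to a klt $n$-complement $K_X+B\sim_\Q 0$ on $X$ (cf.\ \cite[6.1(3)]{Bir19}); then \cite[Theorem~1.3]{HX15} bounds $X$. The paper also records, as an independent alternative, the inclusion $\pFT(d,\epsilon)\subseteq\FT(d,\epsilon')$ for every $0<\epsilon'<\epsilon$ (Proposition~\ref{prop:pFT leq FT}), which is precisely your ``produce $\Delta^+$ directly'' suggestion made rigorous. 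So option~(b) is correct in spirit; what your write-up is missing is the concrete complement construction and the appeal to \cite{HX15}, without which the argument is not yet a proof.
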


\begin{proof}
Let $X\in\pFT(d,\epsilon)$. Then $X$ is of Fano type and we can run $-K_{X}$-MMP. Let $\phi:X\dashrightarrow X'$ be a $-K_{X}$-MMP which terminates with nef $\mathbb{Q}$-divisor $-K_{X'}$. Then $X'$ is $\epsilon$-lc and $-K_{X'}$ is nef and big. Thus, by \cite[Theorem 1.1]{Bir21}, $X'$ belongs to a bounded family. This implies that there is a positive integer $n$ such that $-nK_{X'}$ is Cartier. Now by the effective base point free theorem, we can also assume that $|-nK_{X'}|$ is base point free. Let $H'\in |-nK_{X'}|$ be a general element and let $B'=\frac{1}{n}H'$. Then $K_{X'}+B'$ is a klt $n$-complement of $K_{X'}$ and it gives a klt $n$-complement $K_{X}+B$ of $K_{X}$ (cf. \cite[6.1.(3)]{Bir19}). Finally, by \cite[Theorem 1.3]{HX15}, $X$ belongs to a bounded family.
\end{proof}

We also have the following result.
\begin{proposition}\label{prop:FT<pFT}
For any $\epsilon>0$, there exists $\delta>0$ such that $\FT(d,\epsilon)\subsetneq\pFT(d,\delta)$.
\end{proposition}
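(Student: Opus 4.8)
The plan is to separate the assertion into the inclusion $\FT(n,\epsilon)\subseteq\pFT(n,\delta)$ and its properness, and to observe that the inclusion is free once one takes $0<\delta\le\epsilon$. We already have $\FT(n,\epsilon)\subseteq\pFT(n,\epsilon)$: if $(X,\Delta)$ is $\epsilon$-lc with $-(K_X+\Delta)$ nef and big, then for every birational $f\colon Y\to X$ the pullback $-f^{\ast}(K_X+\Delta)$ is nef, so its negative part vanishes and $\bar a(X,\Delta,F)=a(X,\Delta,F)\ge\epsilon$ for every prime divisor $F$ over $X$, while ``nef and big'' implies ``big''. Moreover $\pFT(n,\epsilon)\subseteq\pFT(n,\delta)$ since $\epsilon$-plc implies $\delta$-plc for $\delta\le\epsilon$. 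Hence the whole content of the proposition is to choose $\delta>0$ small enough that some variety lies in $\pFT(n,\delta)$ but in none of the families $\FT(n,\epsilon)$. (I take $n\ge2$; for $n=1$ both sides degenerate to $\{\P^{1}\}$.)

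To exhibit such a variety I would use a boundedness dichotomy rather than an explicit discrepancy computation. By Theorem \ref{thrm:BAB}, $\FT(n,\epsilon)$ is a bounded family. On the other hand $\FT(n)$ is \emph{not} bounded: by Example \ref{eg:e=0} together with Theorem \ref{thrm:FT criterion}, each Hirzebruch surface $X_m=\P_{\P^{1}}(\mathcal O\oplus\mathcal O(m))$ is of Fano type, hence belongs to $\FT(2)$, and therefore the product $X_m\times\P^{n-2}$ (a product of Fano type varieties) belongs to $\FT(n)$; and $\{X_m\times\P^{n-2}\}_{m\ge1}$ is an unbounded family. Therefore $\FT(n,\epsilon)\subsetneq\FT(n)$, and we may choose $X_{0}\in\FT(n)\setminus\FT(n,\epsilon)$. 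Since $\FT(n)=\bigcup_{\delta'>0}\pFT(n,\delta')$, there is some $\delta_{0}>0$ with $X_{0}\in\pFT(n,\delta_{0})$; set $\delta:=\min(\delta_{0},\epsilon)>0$. Then $\FT(n,\epsilon)\subseteq\pFT(n,\epsilon)\subseteq\pFT(n,\delta)$ while $X_{0}\in\pFT(n,\delta)\setminus\FT(n,\epsilon)$, which is exactly the assertion.

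The only nonformal ingredient is the dichotomy, and inside it the step deserving attention is confirming that the sample family actually lies in $\FT(n)$ --- which is immediate from Example \ref{eg:e=0} via Theorem \ref{thrm:FT criterion} (and the stability of ``Fano type'' under products) --- and that it is unbounded, for instance because the negative section of $X_m$ has self-intersection $-m\to-\infty$, which is impossible inside a bounded family of surfaces; everything else is formal or quoted. If one instead wants a completely explicit $\delta$, one can fix $m$ with $\tfrac{2}{m}<\epsilon$, take $\delta=\tfrac{2}{m}$ and $X_{0}=X_m\times\P^{n-2}$, and argue directly that $X_{0}\notin\FT(n,\epsilon)$: restricting any boundary $\Delta$ with $-(K_{X_0}+\Delta)$ nef and big to a general fibre $S\cong X_m$ of $X_0\to\P^{n-2}$ yields an $\epsilon$-lc pair $(S,\Delta|_{S})$ with $-(K_{S}+\Delta|_{S})$ nef, so $-(K_{X_m}+\Delta|_{S})\cdot M\ge0$; combined with $-K_{X_m}\cdot M=2-m<0$ this forces $\mult_{M}(\Delta|_{S})\ge1-\tfrac{2}{m}$, whence the log discrepancy of $(S,\Delta|_{S})$ along $M$ is at most $\tfrac{2}{m}<\epsilon$, a contradiction. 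I expect this explicit route to be a bit more delicate than the abstract one, the subtle point being the adjunction to a general fibre.
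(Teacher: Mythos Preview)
Your argument is correct and rests on exactly the same three facts as the paper's proof: the inclusion $\FT(n,\epsilon)\subseteq\pFT(n,\delta)$ for $\delta\le\epsilon$, the exhaustion $\FT(n)=\bigcup_{\delta>0}\pFT(n,\delta)$, and the strict inequality $\FT(n)\neq\FT(n,\epsilon)$. The paper packages these as a short proof by contradiction (if $\FT(n,\epsilon)=\pFT(n,\epsilon')$ for every $0<\epsilon'<\epsilon$, then taking the union gives $\FT(n)=\FT(n,\epsilon)$), whereas you unwind this into a direct construction of a witness $X_0$; the two are logically equivalent reformulations of the same idea. Your write-up is more explicit in one respect: the paper simply asserts ``it is clear that $\FT(n)\neq\FT(n,\epsilon)$'', while you supply the justification via Theorem~\ref{thrm:BAB} and the unboundedness of the Hirzebruch family, and you also offer a second, fully explicit route through $X_m\times\P^{n-2}$ that bypasses BAB entirely.
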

\begin{proof}
We know that for a fixed $\epsilon>0$, $\FT(d,\epsilon)\subseteq\pFT(d,\epsilon)\subseteq\pFT(d,\epsilon')$ for any $\epsilon'<\epsilon$. Suppose that $\FT(d,\epsilon)=\pFT(d,\epsilon')$ for any $0<\epsilon'<\epsilon$.
Note that
$$
\FT(d)=\bigcup_{r>0}\pFT(d,r)=\bigcup_{\epsilon>r>0}\pFT(d,r)=\FT(d,\epsilon).
$$
However, it is clear that $\FT(d)\neq\FT(d,\epsilon)$.
\end{proof}

Proposition \ref{prop:FT<pFT} implies that $\pFT(d,\epsilon)$ is not necessarily of the form $\FT(d,\epsilon)$. Thus Theorem \ref{thrm:BAB} cannot be applied directly to obtain the boundedness of $\pFT(d,\epsilon)$.
The following proposition gives an alternative proof for the boundedness of $\pFT(d,\epsilon)$.

\begin{proposition}\label{prop:pFT leq FT}
Let $\epsilon$ be a positive real number and $d$ a positive integer. Then $\pFT(d,\epsilon)\subseteq \FT(d,\epsilon')$ for any $0<\epsilon'<\epsilon$.
\end{proposition}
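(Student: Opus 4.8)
The plan is to upgrade the given potential structure to an honest $\epsilon'$-lc one \emph{on $X$ itself} in three movements: pass to the boundary whose anticanonical is the positive part of the divisorial Zariski decomposition, then add a general complement-type divisor to shrink the failure of nefness while keeping the singularities within $\epsilon-\epsilon'$ of $\epsilon$-lc, and finally interpolate with a klt Fano structure on $X$ to make the anticanonical ample. We may clearly assume $d\ge 2$.

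First I normalize the boundary. Let $(X,\Delta)\in\pFT(d,\epsilon)$. Since $\epsilon>0$ the pair is pklt, and since $\Delta\ge 0$ and $-(K_X+\Delta)$ is big so is $-K_X$; hence $X$ is of Fano type by Theorem \ref{thrm:FT criterion}, in particular a Mori dream space, so $\overline{\NE}(X)$ is rational polyhedral and $R(X;-(K_X+\Delta))$ is finitely generated. Write $-(K_X+\Delta)=P_\sigma+N_\sigma$ for the divisorial Zariski decomposition on $X$ and set $\Delta_1:=\Delta+N_\sigma$. Using on a good log resolution $f\colon Y\to X$ (as in Theorem \ref{thrm:geography}(4) and Lemma \ref{lemma1}) the identity $N_\sigma(-f^{*}(K_X+\Delta))=f^{*}N_\sigma+N_\sigma(f^{*}P_\sigma)$ together with the $f$-exceptionality of $N_\sigma(f^{*}P_\sigma)$ (as $P_\sigma$ is movable), one checks that for every prime divisor $F$ over $X$
\[
a(X,\Delta_1,F)\ \ge\ \bar{a}(X,\Delta,F)\ \ge\ \epsilon,\qquad \bar{a}(X,\Delta_1,F)=\bar{a}(X,\Delta,F)\ \ge\ \epsilon,
\]
and $\mult_\Gamma\Delta_1\le 1-\epsilon$ for every prime $\Gamma\subset X$; thus $\Delta_1$ is a boundary, $(X,\Delta_1)$ is $\epsilon$-lc and $\epsilon$-plc, and $-(K_X+\Delta_1)=P_\sigma$ is movable and big. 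If $P_\sigma$ is nef we are done with $\Delta'=\Delta_1$, so assume not; then there are finitely many extremal rays $R_1,\dots,R_s$ of $\overline{\NE}(X)$ with $P_\sigma\cdot R_i<0$.

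Next I produce the complement. With $f\colon Y\to X$ as above, write $f^{*}P_\sigma=P_Y+\bar N$ with $P_Y$ base point free and $\bar N=N_\sigma(f^{*}P_\sigma)$ $f$-exceptional, and for $m\gg 0$ divisible take $D_0\in|mP_\sigma|$ general, so that $f^{*}D_0=A_m+m\bar N$ with $A_m\in|mP_Y|$ general (irreducible, in simple normal crossings with $\Delta_{1,Y}$, $\bar N$, $\Exc(f)$, where $K_Y+\Delta_{1,Y}=f^{*}(K_X+\Delta_1)$). Put $D:=\tfrac1m D_0\sim_{\Q}P_\sigma$. Fix $\epsilon''$ with $\epsilon'<\epsilon''<\epsilon$. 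The key computation, run on the resolution $f$ and using the generality of $A_m$, is that for every $c\in[0,1)$ and every prime divisor $F$ over $X$ (realized on some $h\colon Y'\to Y$, with $\sigma_F(\,\cdot\,)$ the induced asymptotic valuation),
\[
a(X,\Delta_1+cD,F)=\bar{a}(X,\Delta_1,F)+(1-c)\,\sigma_F\!\big((fh)^{*}P_\sigma\big)-\tfrac cm\mult_F(h^{*}A_m)\ \ge\ \epsilon-\tfrac cm\mult_F(h^{*}A_m),
\]
and that the error term is $\le\epsilon-\epsilon''$ once $m\gg 0$: for $F$ on $Y$ we have $\mult_F A_m\le 1$, while for $F$ extracted over a subvariety contained in $A_m$ one uses that $\mult_F(h^{*}A_m)$ grows at most linearly in the length of the extraction whereas $a(X,\Delta_1,F)$ grows at least like $(d-1)\ge 1$ times that length, thanks to $d\ge 2$ and $\mult_\Gamma\Delta_{1,Y}\le 1-\epsilon$. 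Hence, fixing $m$ large, $(X,\Delta_1+cD)$ is $\epsilon''$-lc for every $c\in[0,1)$, and $-(K_X+\Delta_1+cD)\sim_{\Q}(1-c)P_\sigma$ is movable and big.

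Finally I make the anticanonical ample. Since $X$ is of Fano type, fix a boundary $\Delta_0$ with $(X,\Delta_0)$ klt and $A_0:=-(K_X+\Delta_0)$ ample, let $\epsilon_0>0$ be the minimal log discrepancy of $(X,\Delta_0)$, and set $K:=\max_i\,(-P_\sigma\cdot R_i)/(A_0\cdot R_i)<\infty$. For $\Theta:=(1-\lambda)(\Delta_1+cD)+\lambda\Delta_0$ one has $-(K_X+\Theta)\sim_{\Q}(1-\lambda)(1-c)P_\sigma+\lambda A_0$, which is ample (hence nef and big) as soon as $\lambda/(1-\lambda)>(1-c)K$, while $a(X,\Theta,F)\ge(1-\lambda)\epsilon''+\lambda\epsilon_0\ge\epsilon'$ provided $\lambda\le(\epsilon''-\epsilon')/(\epsilon''-\epsilon_0)$ (or any $\lambda$ if $\epsilon_0\ge\epsilon'$). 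Since the upper bound on $\lambda$ is a fixed positive number and the lower bound tends to $0$ as $c\to1$, choosing first $c$ close enough to $1$ (and $m$ large accordingly), then $\lambda$ in the resulting nonempty interval, makes $\Theta$ a boundary with $(X,\Theta)$ $\epsilon'$-lc and $-(K_X+\Theta)$ nef and big, so $X\in\FT(d,\epsilon')$. The main obstacle is the middle movement: controlling the singularities of $(X,\Delta_1+cD)$ away from $\epsilon$-lc by only the small amount $\epsilon-\epsilon''$, uniformly over all divisors $F$ — this is why $D$ must be taken general in a \emph{sufficiently divisible} multiple of $|P_\sigma|$ and the estimate must be carried out on the special resolution of Theorem \ref{thrm:geography}(4); the choice of the constants $c,\lambda,m$ is then routine precisely because $-(K_X+\Delta_1)$ fails to be nef only along the finitely many rays $R_i$.
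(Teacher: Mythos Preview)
Your overall strategy is sound and takes a genuinely different route from the paper. The paper works directly with $\Delta$: it constructs $D=f_*(P_0+N)$ with $K_X+\Delta+D\sim_{\Q}0$, writes $-(K_X+\Delta)\sim_{\Q}A+\Delta'$ with $A$ ample, and considers the single interpolation $\Delta+t\Delta'+(1-t)D$, whose anticanonical is $\sim_{\Q}tA$; the remaining work is to bound the coefficients of $f^*\Delta'$, which the paper handles by an extremal--curve intersection estimate. Your three--movement scheme (absorb $N_\sigma$ into $\Delta_1$, add $cD$ with $D\sim_{\Q}P_\sigma$ general, then convexly combine with an auxiliary klt Fano boundary $\Delta_0$) trades one interpolation for two but completely sidesteps the need to control any specific auxiliary divisor such as $\Delta'$; ampleness is obtained from $\Delta_0$ rather than from a fixed decomposition of $-(K_X+\Delta)$. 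The identities $\bar a(X,\Delta_1,F)=\bar a(X,\Delta,F)$ and $a(X,\Delta_1,F)\ge\bar a(X,\Delta,F)$ that you use in the first movement do follow from $N_\sigma(-f^*(K_X+\Delta))=f^*N_\sigma+N_\sigma(f^*P_\sigma)$ on the good resolution, and your final interpolation with $\Delta_0$ is correct as written.

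There is, however, one point in the middle movement that is not proved as stated. The assertion that $\tfrac{c}{m}\mult_F(h^*A_m)\le \epsilon-\epsilon''$ uniformly in $F$ is justified only by an appeal to ``$\mult_F(h^*A_m)$ grows at most linearly in the length of the extraction whereas $a(X,\Delta_1,F)$ grows at least like $(d-1)$ times that length''. This is a heuristic, not an argument: there is no well--defined ``length of the extraction'', and for arbitrary valuations no such comparison is available. Fortunately the correct argument is \emph{simpler} than what you sketch. Since $A_m$ is a general member of the base--point--free system $|mP_Y|$, Bertini gives $A_m$ smooth and in simple normal crossings with $\Supp(\Delta_{1,Y})\cup\Supp(\bar N)\cup\Exc(f)$. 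Thus $(Y,\Delta_{1,Y}+c\bar N+\tfrac{c}{m}A_m)$ is an SNC sub--pair; its coefficients along components of $\Supp(\Delta_{1,Y}+\bar N)$ are at most those of $\Delta_{1,Y}+\bar N$, hence $\le 1-\epsilon$ by the $\epsilon$-plc hypothesis, while the coefficient along $A_m$ is $\tfrac{c}{m}<\tfrac{1}{m}$. For $m$ large all coefficients are therefore $\le 1-\epsilon''$, and an SNC sub--pair with coefficients $\le 1-\epsilon''$ is sub--$\epsilon''$-lc. This gives $(X,\Delta_1+cD)$ $\epsilon''$-lc for every $c\in[0,1)$ without any growth estimate. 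With this replacement your proof goes through.
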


\begin{proof}
Suppose that $X\in\pFT(d,\epsilon)$. Then there exists a boundary divisor $\Delta$ on $X$ such that $-(K_{X}+\Delta)$ is big and $(X,\Delta)$ is $\epsilon$-plc. If $-(K_{X}+\Delta)$ is nef, then obviously, $X\in \FT(d,\epsilon)$. Thus we assume that $-(K_{X}+\Delta)$ is not nef. Since $X$ is of Fano type, the section ring $R(-(K_{X}+\Delta))$ is finitely generated. Let $f:Y\to X$ be a log resolution of $(X,\Delta)$ and let $-f^{\ast}(K_{X}+\Delta)=P+N$ be the good Zariski decomposition. Write $K_{Y}+\Delta_{Y}=f^{\ast}(K_{X}+\Delta)$. Take a general element $P_{0}\in |P|_{\Q}$ and let $D:=f_{\ast}(P_{0}+N)$. Then $K_{X}+\Delta+D\sim_{\Q}0$ and $f^{\ast}(K_{X}+\Delta+D)=K_{Y}+\Delta_{Y}+N+P_{0}$. Since $-(K_{X}+\Delta)$ is big, there are ample $\Q$-divisor $A$ and an effective $\Q$-Cartier $\Q$-divisor $\Delta'$ such that $-(K_{X}+\Delta)\sim_{\Q}A+\Delta'$. Let $\delta$ be a positive real number. Consider the pair $(X,\Delta+\delta\Delta'+(1-\delta)D)$. By construction,
\[f^{\ast}(K_{X}+\Delta+\delta\Delta'+(1-\delta)D)=K_{Y}+\Delta_{Y}+N+\delta(f^{\ast}\Delta'-N)+P_{0}.\]
If the coefficients of $f^{\ast}\Delta'$ are bounded, then we can show that the assertion is true by taking sufficiently small $\delta$. Since $-(K_{X}+\Delta)$ is not nef, by the cone theorem (cf. \cite[Theorem 3.7]{KM}), there exists a rational curve $C$ on $X$ such that $0<-C\cdot (K_{X}+\Delta)\le 2d$. Let $C'$ be a curve on $Y$ such that $f(C')=C$. Then we have
\[C'\cdot f^{\ast}\Delta'<C'\cdot f^{\ast}(A+\Delta')\le 2d.\]
Therefore, for any $y\in Y$,
\[\mult_{y}C'\cdot \mult_{y}f^{\ast}\Delta'<2d.\]
Hence the coefficients of $f^{\ast}\Delta'$ is bounded.
\end{proof}

Since $\FT(d,\epsilon')$ is bounded by Theorem \ref{thrm:BAB}, so is $\pFT(d,\epsilon)$.


\end{document}